\newcommand {\PP}{{I\kern-.3em P}}
\newcommand {\HH}{{\mathbb{H}}}
\newcommand {\RR}{{\mathbb{R}}}
\newcommand {\NN}{{\mathbb {N}}}
\newcommand{\beq}{\begin{equation}}
\newcommand{\eeq}{\end{equation}}
\newcommand{\beqq}{\begin{equation*}}
\newcommand{\eeqq}{\end{equation*}}
\newcommand{\pmtx}[1]{\begin{pmatrix}#1\end{pmatrix}}
 \numberwithin{equation}{section}
\def\d{\displaystyle}
\def\p{$p\,\,$}
\def\btheta{\bar{\theta}}
\def\bx{\bar{x}}
\def\by{\bar{y}}
\def\bT{{\bf T}}
\def\bB{{\bf B}}
\def\bF{{\bf F}}
\def\bD{{\bf D}}
\def\Oa{\Omega_{\alpha}}
\newtheorem{thm}{Theorem}
\newtheorem{lemma}{Lemma}
\newtheorem{prop}{Proposition}
\begin{document}

\title{Sequences and dynamical systems associated with canonical approximation by rationals}
\author{Andrew Haas }
 \email{haas@math.uconn.edu}
 \address{University of Connecticut, Department of Mathematics,
 Storrs, CT 06269}
\begin{abstract}
We study metrical properties of various subsequences associated to the sequence of rational approximants coming from the continued fraction of an irrational number. Our methods  build upon Bosma, Jager and Wiedijk's proof of the Doblin-Lenstra conjecture  as well as Jager's subsequent treatment of the sequence of approximation pairs. 
 
 \end{abstract}

\thanks{{\em 2010 Mathematics Subject Classification.} 11K60, 11J83, 37E30 }


 
\maketitle
    \markboth{Sequences associated with approximation by rationals}
 {Andrew Haas}

\section{introduction}
Each irrational $x\in(0,1)$ has a unique representation as an infinite regular continued fraction $[a_1,a_2,\ldots]$. By virtue of this representation, it is possible to associate to $x$ the sequence of convergents $p_n/q_n=[a_1,\ldots ,a_n]$,\cite{HW}.  Define $\theta_n(x)= q_n|q_nx-p_n|$. We are interested in studying generic properties of certain subsequences of the sequences $\{p_n/q_n\}$,  $\{\theta_n\}$ and $\{(\theta_n,\theta_{n+1})\}$. Our methods make use of ergodic theory and hyperbolic geometry and build upon Bosma, Jager and Wiedijk's proof of the Doblin-Lenstra conjecture (refered to as D-L) \cite{bjw,knuth,dankraa} as well as Jager's subsequent treatment of the sequence of approximation pairs \cite{jager, dankraa}. 

For $\alpha\in(0,1]$, define the subsequence $ \bar{\Theta}(\alpha)=\{ \theta_{n_k}\}$, where $\theta_{n_k}<\alpha$ and if  $\theta_n<\alpha$ then  $\theta_n=\theta_{n_k} $ for some $k$. There is of course the associated subsequence  of   convergents $\{p_{n_k}/q_{n_k}\}$. For simplicity we suppress the reference to both $x$ and $\alpha$ and denote these two subsequences by $\bar{\Theta}(\alpha)=\{\bar{\theta}_k\}$ and $\{\bar{p}_k/\bar{q}_k\}.$  One interesting fact that is evident in our treatment is  that, while for $\alpha\leq 1/2$  the sequence $\bar{\Theta}(\alpha)$ is equidistributed in the interval $(0,\alpha)$, the corresponding sequence of pairs is far from  evenly distributed in its natural domain. For $\alpha\geq 1/2$ we  derive the actual density function for the distribution. We also see how D-L and a famous theorem of L\'evy on the growth of the numerator of convergents can be realized in this setting.

As in \cite{bjw} and \cite{jager}, the main tool is the natural automorphic extension $\bT$ of the Gauss map. The subsequences are associated to a family of automorphisms defined by taking the first return to a region $\Oa$. These first return maps are extremely interesting in their own right. While $\bT$ is Bernoulli and therefore qualifies as being chaotic, with decreasing $\alpha$ the first return maps exhibit increasing complexity and appear to do so in a manner reminiscent of the structure of the Markoff spectrum \cite{cf}. Techniques form hyperbolic geometry will be employed to gain some understanding of the structure of the first return maps and to point the way for dealing with cases when $\alpha<1/2$.

\section{basic properties of the sequences and the natural extension} 
 \subsection{The natural extension of the Gauss map}
 
We shall   begin by discussing the Gauss map, its natural automorphic extension $\bT$ and a related group of first return maps. These maps are used to pinpoint the sequence of values $n_k$ mentioned above as well as providing the framework for analyzing dynamical properties of the sequence of thetas and the pairs.

The classical Gauss map is defined  on the open unit interval $I=(0,1)$ by $T(x)=1/x-[1/x]$, where we use $[r]$ to denote the greatest integer less than or equal to $r$. The map has several nice properties. First $T$ acts as a shift on the continued fraction expansions: $T([a_1,a_2,\ldots])=[a_2,a_3,\ldots]$.  Secondly, $T$ is ergodic with respect to Lebesgue measure and has the absolutely continuous invariant probability measure  $(\log 2(1+x))^{-1}dx$,  \cite{dankraa}. 

  There is a particularly simple and useful realization of the natural automorphic extension of $\bT$  \cite{keller} due to Nakada, et.al. \cite{nakada, dankraa}. 
See also \cite{af0}. We shall use a closely related realization of $\bf{T}$, defined on  $\Omega=I\times (-\infty,-1)$ by
\beqq
{\bf T}(x,y)=(1/x-[1/x],1/y-[1/x]),\hskip .8in
\eeqq
  with the ergodic invariant probability measure $\mu= (\log 2)^{-1}(x-y)^{-2}\,dA$ \cite{haas1, series, nakada2}.   

Define 
${\bf T}(x,\infty)=((1/x-[1/x],-[1/x])=(x_0,y_0)$. 
Let $\bT^{n}(x_0,y_0)=(x_n,y_n)$. By induction we have: if $x=[a_1,a_2,...]$  then 
\beqq
x_n=[a_{n+2},a_{n+3},...]\,  \text{and}\, y_n=-a_{n+1}-[a_n,...,a_1].\hskip .7in
\eeqq
Note that while $\bT$ will eventually run out of steam if $x$ is rational, there is no problem defining the map when $y$ is rational or $\infty.$

\subsection{Definition of the first return maps and their relation to thetas}

Set $\Omega_{\alpha}=\{(x,y)\in\Omega\,|\,(x-y)^{-1}<\alpha\}$. For $(x,y)\in\Omega_{\alpha}$, let $\tau_{\alpha}(x,y)=\min\{n\geq 1\,|\, {\bf T}^n(x,y)\in\Omega_{\alpha}$\}. Since ${\bf T}$ is ergodic, $\tau_{\alpha}$ is defined, with the exception of  a set of measure zero \cite{keller} (which we shall ignore). Now define the automorphism  ${\bf T}_{\alpha}$ on  $\Omega_{\alpha}$ by 
\beqq
{\bf T}_{\alpha}(x,y)={\bf T}^{\tau_{\alpha}(x,y)}(x,y).\hskip 1.5in
\eeqq
 We shall use the convention of denoting the $n^{th}$ iterate of a map with an exponent. ${\bf T}_{\alpha}$ is called the first return map on $\Omega_{\alpha}$.  Note that $\Omega_1=\Omega$ and $\bT_1=\bT.$

For $(u,v)\in\RR^2$ let $||(u,v)||=1/(u-v). $ 
Given $x\in(0,1)$ set $(\bx_k,\by_k)=\bT_{\alpha}^{k+1}(x,\infty)=\bT_{\alpha}^k(x_0,y_0).$ Aside from having some  inherent interest, the importance of $\bT_{\alpha}$ stems from the way in which it ties the sequence  $\bar{\Theta}(\alpha)$ to an ergodic dynamical system. 

\begin{lemma}\label{lemma1}

For almost each irrational $x\in (0,1)$, $\bar{\theta}_k=||\bT_{\alpha}^{k}(x_0, y_0)||$.
\end{lemma}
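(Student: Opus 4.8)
The plan is to unwind both sides of the claimed identity in terms of the continued fraction data and show they coincide. First I would recall the classical formula $\theta_n(x) = q_n|q_n x - p_n|$ and its well-known expression in terms of the ``future'' and ``past'' of the continued fraction expansion: writing $x = [a_1,a_2,\ldots]$, one has $\theta_n = (x_n - y_n)^{-1}$ with $x_n = [a_{n+2},a_{n+3},\ldots]$ and $y_n = -a_{n+1} - [a_n,\ldots,a_1]$, which is exactly the formula recorded just before the lemma statement together with the notation $\|(u,v)\| = 1/(u-v)$. In other words, $\theta_n = \|\bT^n(x_0,y_0)\|$ where $(x_0,y_0) = \bT(x,\infty)$. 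So the content of the lemma is purely combinatorial: the orbit of $(x_0,y_0)$ under the first return map $\bT_\alpha$ visits exactly those iterates $\bT^n(x_0,y_0)$ with $\|\bT^n(x_0,y_0)\| < \alpha$, i.e.\ those $n$ with $\theta_n < \alpha$, and it visits them in increasing order of $n$.

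Next I would make precise the bookkeeping. By definition $\bT^n(x_0,y_0) \in \Omega_\alpha$ iff $(x_n - y_n)^{-1} < \alpha$ iff $\|\bT^n(x_0,y_0)\| = \theta_n(x) < \alpha$. Hence the set $\{n \ge 0 : \bT^n(x_0,y_0) \in \Omega_\alpha\}$ is precisely the set of indices $n$ appearing in the definition of $\bar\Theta(\alpha)$, shifted by the conventions relating $\bT^n(x_0,y_0)$ to $\theta_n$. The first return map $\bT_\alpha$, by construction, takes a point of $\Omega_\alpha$ to the next point of its $\bT$-orbit lying in $\Omega_\alpha$; therefore iterating $\bT_\alpha$ starting from $(x_0,y_0)$ enumerates the points $\bT^{n_0}(x_0,y_0), \bT^{n_1}(x_0,y_0), \ldots$ with $n_0 < n_1 < \cdots$ running through exactly the indices with $\theta_{n_j} < \alpha$. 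Applying $\|\cdot\|$ to this enumeration gives $\|\bT_\alpha^k(x_0,y_0)\| = \theta_{n_k}(x) = \bar\theta_k$, which is the assertion. I should be slightly careful about whether $(x_0,y_0)$ itself lies in $\Omega_\alpha$ (i.e.\ whether $\theta_0 < \alpha$) and about the indexing offset between ``$\bar\theta_k = \|\bT_\alpha^k(x_0,y_0)\|$'' and ``$\bar\theta_k = \|\bT_\alpha^{k+1}(x,\infty)\|$''; matching these conventions is routine once one writes $\bT_\alpha^k(x_0,y_0) = \bT_\alpha^{k+1}(x,\infty)$ as already noted in the text, and the definition of $\bar\Theta(\alpha)$ forces the $k$-indexing to start at the first qualifying index.

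The only genuine subtlety — and where the ``almost every $x$'' hypothesis enters — is that $\tau_\alpha$ must be finite at every point of the orbit $\{\bT_\alpha^k(x_0,y_0)\}_{k\ge0}$, so that the enumeration does not terminate and actually exhausts all qualifying indices; equivalently, $\bar\Theta(\alpha)$ must be an infinite sequence. This is where I would invoke ergodicity of $\bT$: since $\mu(\Omega_\alpha) > 0$, Poincaré recurrence (or the argument cited via \cite{keller}) guarantees that for $\mu$-a.e.\ point $\tau_\alpha$ is well-defined and finite, and moreover a.e.\ $\bT$-orbit enters $\Omega_\alpha$ infinitely often; one then notes that the map $x \mapsto (x_0,y_0)$ pushes Lebesgue-a.e.\ $x \in (0,1)$ to a $\mu$-generic point of $\Omega$, so the exceptional null set in $\Omega$ pulls back to a Lebesgue-null set of $x$. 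This recurrence/measure-transfer step is the main obstacle, but it is already granted by the sentence preceding the lemma (``Since $\bT$ is ergodic, $\tau_\alpha$ is defined, with the exception of a set of measure zero''), so in the write-up it amounts to citing that fact and checking the pushforward-of-null-sets point; the rest is the deterministic unwinding described above.
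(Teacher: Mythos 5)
Your proposal is correct and follows essentially the same route as the paper: establish the identity $\theta_n=\|\bT^n(x_0,y_0)\|$ from the continued fraction formulas, then observe that membership in $\Omega_\alpha$ is exactly the condition $\theta_n<\alpha$, so iterating the first return map enumerates the qualifying indices in order (the paper formalizes this bookkeeping as an induction, which is the same argument). Your additional care about recurrence and the pushforward of null sets is reasonable; the paper handles the infinitude of visits separately, just after the lemma, via Khinchin's theorem on the divergence of the average of the partial quotients.
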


\begin{proof}

This  is an elaboration on $\theta_n=||\bT^{n}(x_0,y_0)||$ \cite{haasseries, haasmolnar}, which is proved in the following sequence of equalities.  
\beq\label{forkhin}
\frac{1}{x_n-y_n}=\frac{1}{[a_{n+2},...]+a_{n+1}+[a_n,...,a_1]}=\left(\frac{1}{T^n(x)}-\frac{q_{n-1}}{q_n}\right)^{-1}=\theta_n
\eeq
 where we refer to \cite{HW} and \cite{koksma}, respectively for the second and third equalities.

The proof is by induction. 
Let $m$ be the smallest value so that $\bT^m(x_0,y_0) =(x_m,y_m)\in \Omega_{\alpha}.$ It follows that  $\bT^m(x_0,y_0) =\bT_{\alpha}(x_0,y_0)$.  But this is also  equivalent to $m$ being the smallest value for which $\theta_m=\frac{1}{x_m-y_m}<\alpha$. Together these give, $\btheta_1=\theta_m=||\bT^{m}(x_0,y_0)||=||\bT_{\alpha}(x_0,y_0)||.$

Now to the inductive step. We suppose that $\bar{\theta}_k=||\bT_{\alpha}^{k}(x_0, y_0)||$. In terms of the natural extension this is  $\theta_{n_k}=||\bT^{n_k}(x_0,y_0)||$. 
Let $\theta_m=\theta_{n_{k+1}}=\btheta_{k+1}$. Then $||\bT^{m}(x_0,y_0)||=\theta_m<\alpha$ and for $n_k<r< m$, $ ||\bT^{r}(x_0,y_0)|| >\alpha.$ Translating  this says that $\bT^m(x_0,y_0)\in\Omega_{\alpha}$ and $\bT^r(x_0,y_0)\not\in\Omega_{\alpha}$ for $n_k<r< m$. Thus, $\bT^m(x_0,y_0) =\bT_{\alpha}^{k+1}(x_0, y_0).$ Putting this all together we have

\beqq
\btheta_{k+1}=\theta_{m}=||\bT^{m}(x_0,y_0)||=||\bT_{\alpha}^{k+1}(x_0, y_0)||.\hskip .5in
\eeqq

\end{proof}

By a theorem of Khinchin, for almost all $x\in (0,1) $  the limiting average of the partial quotients $a_i$ diverges to infinity \cite{khinchin}. Consequently, using the characterization of $\theta_n$ in \ref{forkhin}, for a.a $x$ the values $\theta_n$ get arbitrarily small.  It then follows from the lemma that for almost all $x$, $T^j(x_0,y_0)\in\Omega_{\alpha}$ for infinitely many positive integers $j$, and so the first return maps are all defined for points $(x,y)$ for $x$ in a set of positive measure in (0,1).

\subsection{Basic properties of the subsequences}

Our main tool in this section is the following theorem, which  will later be shown to hold when $\bT$ is replaced by one of the maps  $\bT_{\alpha}$ 

\begin{thm}\label{ally}\cite{bjw,jager,haasmolnar}
For $x$ in a full measure set $I_0\subset (0,1)$ and all $y\in (-\infty, -1)$, the sequence of points $\{{\bf T}^n(x,y)\}$ is distributed in $\Omega$ according to the density function 
 $f(x,y)=(\log 2)^{-1}(x-y)^{-2}$.
    \end{thm}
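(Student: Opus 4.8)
The plan is to deduce Theorem~\ref{ally} from the ergodicity of $\bT$ with respect to $\mu$ together with the fact that $\mu$ has density $f(x,y)=(\log 2)^{-1}(x-y)^{-2}$. For a fixed starting point $(x,y)$ with $x$ irrational, the orbit $\{\bT^n(x,y)\}$ being ``distributed according to $f$'' means precisely that for every reasonable set $A\subseteq\Omega$ (say, a rectangle, or a set whose boundary has $\mu$-measure zero),
\beqq
\lim_{N\to\infty}\frac{1}{N}\#\{\,0\le n<N\ :\ \bT^n(x,y)\in A\,\}=\mu(A).
\eeqq
By the Birkhoff ergodic theorem applied to the indicator $\mathbf{1}_A$ (or, to get all $A$ simultaneously, to a countable class of continuous test functions that is dense enough to determine weak-$*$ convergence of empirical measures), this limit holds for $\mu$-almost every $(x,y)\in\Omega$. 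The substance of the theorem is the strengthening from ``$\mu$-a.e.\ $(x,y)$'' to ``$\mu_I$-a.e.\ $x$ and \emph{every} $y\in(-\infty,-1)$'', i.e.\ removing the exceptional set in the second coordinate entirely.

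The key mechanism for this uniformity in $y$ is the contraction of the $y$-coordinate under $\bT$: from the formula $y_n=-a_{n+1}-[a_n,\ldots,a_1]$ one sees that $y_n$ depends only on $a_1,\ldots,a_{n+1}$ and not on the initial $y$ at all once $n\ge 1$ (after one step the $y$-coordinate is $-[1/x]$ plus a rational built from the digits, independent of the original $y$); more robustly, two orbits $\bT^n(x,y)$ and $\bT^n(x,y')$ with the same $x$ have $x$-coordinates that agree exactly for all $n$, while their $y$-coordinates converge to each other geometrically fast because each application of $\bT$ is a Möbius map in $y$ of the form $y\mapsto 1/y-a$ which is a contraction on $(-\infty,-1)$ near the relevant fibres. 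Hence the empirical distribution of $\{\bT^n(x,y)\}$ is independent of $y$: if it converges to $\mu$ for one $y$ in the fibre over $x$, it converges to $\mu$ for all of them. So it suffices to know that for $\mu_I$-a.e.\ $x$ there exists \emph{some} $y$ with the equidistribution property — and that follows from the a.e.\ statement over $\Omega$ by Fubini, since a $\mu$-full set in $\Omega$ meets $\mu_I$-a.e.\ fibre in a set of full conditional measure, in particular nonempty.

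Concretely I would organize the argument as: (1) fix a countable family $\{g_j\}$ of bounded continuous functions on $\Omega$ that determines weak-$*$ convergence of probability measures on $\Omega$; (2) apply Birkhoff to each $g_j$ to get a $\mu$-full set $\Omega_0$ on which $\frac1N\sum_{n<N}g_j(\bT^n(x,y))\to\int g_j\,d\mu$ for all $j$; (3) by Fubini, let $I_0$ be the set of $x$ such that the fibre $\{x\}\times(-\infty,-1)$ meets $\Omega_0$ in a full-measure, hence dense, set — $I_0$ has full measure in $(0,1)$; (4) for $x\in I_0$ and arbitrary $y\in(-\infty,-1)$, pick $y^*$ in the fibre with the equidistribution property and close by, and use the geometric contraction in the $y$-coordinate, uniform continuity of the $g_j$, and a standard Cesàro/telescoping estimate to transfer the Birkhoff average from $(x,y^*)$ to $(x,y)$; (5) conclude that $\{\bT^n(x,y)\}$ is $\mu$-equidistributed, which is the assertion with density $f$.

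The main obstacle is step~(4): making the transfer uniform over \emph{all} $y\in(-\infty,-1)$, including $y$ very close to $-1$ or very negative, where the contraction constant degenerates and the test functions $g_j$ (if one allows unbounded ones like $(x-y)^{-2}$ weights) misbehave. The clean way around this is to note that $y$-coordinates of $\bT^n(x,y)$ for $n\ge 1$ lie in a region determined by $a_{n+1}=[1/T^n x]\ge 1$, so in fact $y_n\in(-\infty,-1]$ with the interesting dynamics happening on bounded pieces, and that after a single step the dependence on the original $y$ is already extremely weak; alternatively one invokes the known fact (this is essentially the content of \cite{bjw,jager}) that the two orbits shadow each other so that $|y_n(x,y)-y_n(x,y')|\to 0$, reducing everything to the diagonal fibre behaviour. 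I would present step~(4) via this shadowing estimate and relegate the routine $\varepsilon$-bookkeeping to a remark, citing \cite{bjw, jager, haasmolnar} for the prototype of exactly this argument with $\bT$.
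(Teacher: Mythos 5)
The paper does not actually prove Theorem~\ref{ally}: it is quoted as a known result with the citations \cite{bjw,jager,haasmolnar}, and the only in-text commentary is the remark that ``the Ergodic Theorem is not sufficient in itself to guarantee convergence for the particular values appearing in the theorem.'' Your proposal is a reasonable reconstruction of the argument in those references, and you correctly identify both the genuine difficulty (Birkhoff gives the statement only for $\mu$-a.e.\ $(x,y)$, whereas the theorem demands it for \emph{every} $y$ in the fibre, including the measure-zero points $(x_0,y_0)=\bT(x,\infty)$ that the rest of the paper actually uses) and the standard fix (Fubini to find a good $y^*$ in almost every fibre, then shadowing in the $y$-coordinate to transfer the equidistribution to all $y$). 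Two small corrections to your write-up. First, the assertion that $y_n$ ``does not depend on the initial $y$ at all once $n\ge 1$'' is false for a general starting value; the formula $y_n=-a_{n+1}-[a_n,\ldots,a_1]$ holds only for the orbit started at $y=\infty$. Your fallback contraction argument is the right one and is what actually carries the proof. Second, the degeneration you worry about in step (4) does not occur: after $n$ steps the $y$-coordinate is moved by the M\"obius map $g_n$ of (\ref{matrix}), whose derivative on $(-\infty,-1)$ is bounded by $(q_{n-1}+p_{n-1})^{-2}\le q_{n-1}^{-2}$ uniformly in $y$, including $y$ near $-1$ and $y\to-\infty$; so the shadowing estimate $|y_n-y_n'|\to 0$ is uniform over the whole fibre and the Ces\`aro transfer is routine. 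With those repairs the proposal is a correct proof along the lines of \cite{bjw}, whereas the paper simply imports the theorem.
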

    
 We shall further suppose that for $x\in I_0$ the first return maps are all defined at $(x_0,y_0)$. 

Observe that $f$ is the density function for the $\bT$-invariant probability measure defined earlier, but the Ergodic Theorem is not sufficient in itself to guarantee convergence for the particular values appearing in the theorem, \cite{bjw, knuth}.
 Using   Theorem \ref{ally} we can easily prove the following version of the well known theorem of L\'evy \cite{levy}. It is interesting to  see that the placement of the log term in the constant is a consequence of  the same phenomena observed in D-L, where good approximations that are only first mediants are not accounted for by the continued fraction expansion when $\alpha>1/2$, \cite{bosma,HW}.

\begin{prop}
Given $\alpha\in(0,1]$,  for almost all $x\in (0,1)$  
\vskip .1in
 \hskip .4in$\d{\lim_{n\rightarrow\infty}\frac{\log\bar{q}_n}{n}= \left\{ \begin{array}{ll} 
    \pi^2(12(1-\alpha+\log 2 +\log \alpha))^{-1}& {\rm if} \,  \alpha>\frac{1}{2}\\
    \\
   \pi^2(12\alpha)^{-1} &  {\rm if}\,   \alpha\leq\frac{1}{2} .
\end{array} \right.
}$ 
 
  \end{prop}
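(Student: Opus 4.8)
The plan is to derive both the stated limit and L\'evy's classical theorem from Theorem~\ref{ally}, by applying it once to an unbounded test function and once to the indicator of $\Omega_\alpha$. Fix $x$ in the full measure set $I_0$ of Theorem~\ref{ally}, and write $\bar q_k=q_{n_k}$ for the $k$-th term of the subsequence, so that the quantity in the statement is $\lim_k(\log\bar q_k)/k$. The basic algebraic input is that the second coordinate of $\bT^n(x_0,y_0)$ satisfies $-y_n=q_{n+1}/q_n$: from $y_n=-a_{n+1}-[a_n,\ldots,a_1]$ and the recurrence $q_{n+1}/q_n=a_{n+1}+q_{n-1}/q_n=a_{n+1}+[a_n,\ldots,a_1]$ this is immediate, and telescoping gives $\log q_N=\sum_{n=0}^{N-1}\log(-y_n)$. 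Hence
\[
\frac{\log\bar q_k}{k}=\frac{\log q_{n_k}}{n_k}\cdot\frac{n_k}{k},\qquad\text{where}\qquad\frac{\log q_{n_k}}{n_k}=\frac1{n_k}\sum_{n=0}^{n_k-1}\log(-y_n),
\]
and the strategy is to show the first factor tends to $\pi^2/(12\log2)$ and $n_k/k$ tends to $\mu(\Omega_\alpha)^{-1}$.

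For the factor $(\log q_{n_k})/n_k$, observe that $g(x,y)=\log(-y)$ is $\mu$-integrable on $\Omega$ (near $y=-\infty$ the density $f$ decays like $y^{-2}$), so Theorem~\ref{ally}, applied to the bounded truncations $\min(g,M)$ and then with $M\to\infty$, yields
\[
\frac{\log q_N}{N}=\frac1N\sum_{n=0}^{N-1}g\big(\bT^n(x_0,y_0)\big)\ \longrightarrow\ \int_\Omega g\,d\mu=\frac1{\log 2}\int_0^1\frac{\log(1+x)}{x}\,dx=\frac{\pi^2}{12\log 2},
\]
which is exactly L\'evy's theorem; since $n_k\to\infty$, also $(\log q_{n_k})/n_k\to\pi^2/(12\log2)$. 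For the frequency factor, the identity $\theta_n=||\bT^n(x_0,y_0)||=(x_n-y_n)^{-1}$ from \ref{forkhin} shows that $\theta_n<\alpha$ is precisely $\bT^n(x_0,y_0)\in\Omega_\alpha$; since $\partial\Omega_\alpha=\{(x-y)^{-1}=\alpha\}$ is a $\mu$-null curve, Theorem~\ref{ally} applied to $\mathbf 1_{\Omega_\alpha}$ gives $\frac1N\#\{n<N:\theta_n<\alpha\}\to\mu(\Omega_\alpha)$. As $n_k$ is by construction the $k$-th index with $\theta_n<\alpha$, we have $\#\{n\le n_k:\theta_n<\alpha\}=k$, hence $k/n_k\to\mu(\Omega_\alpha)$ and $n_k/k\to\mu(\Omega_\alpha)^{-1}$. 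Combining,
\[
\lim_{k\to\infty}\frac{\log\bar q_k}{k}=\frac{\pi^2}{12\,\log 2\,\cdot\,\mu(\Omega_\alpha)}.
\]

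It remains to evaluate $\mu(\Omega_\alpha)$, with $\Omega_\alpha=\{(x,y):0<x<1,\ y<-1,\ x-y>1/\alpha\}$. If $\alpha\le\frac12$ then $1/\alpha\ge2$, so $y<x-1/\alpha$ already forces $y<-1$, and
\[
\mu(\Omega_\alpha)=\frac1{\log2}\int_0^1\!\!\int_{-\infty}^{x-1/\alpha}\!\frac{dy\,dx}{(x-y)^2}=\frac1{\log2}\int_0^1\alpha\,dx=\frac{\alpha}{\log2},
\]
which gives the second case. If $\alpha>\frac12$, the line $y=x-1/\alpha$ crosses $y=-1$ at $x_\ast=1/\alpha-1\in(0,1)$; the inner $y$-integral is $\alpha$ for $x<x_\ast$ and $1/(x+1)$ for $x>x_\ast$, so
\[
\mu(\Omega_\alpha)=\frac1{\log2}\Big(\alpha\big(\tfrac1\alpha-1\big)+\int_{x_\ast}^1\frac{dx}{x+1}\Big)=\frac{1-\alpha+\log2+\log\alpha}{\log2},
\]
yielding the first case (and agreeing with the second at $\alpha=\tfrac12$, and with classical L\'evy at $\alpha=1$).

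The one point that needs genuine care is the application of Theorem~\ref{ally} to test functions that are not bounded and continuous: the unbounded $g=\log(-y)$ and the indicator $\mathbf 1_{\Omega_\alpha}$. Both are handled by standard truncation/approximation arguments, available precisely because $g$ is $\mu$-integrable and $\partial\Omega_\alpha$ is $\mu$-null; the remaining ingredients — the telescoping formula for $\log q_N$, the frequency count, and the two elementary integrals — are routine.
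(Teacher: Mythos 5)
Your proof is correct and its core is the same as the paper's: you factor $\frac{\log\bar q_k}{k}=\frac{\log q_{n_k}}{n_k}\cdot\frac{n_k}{k}$, identify the first factor as L\'evy's constant $\pi^2/(12\log 2)$, and get $k/n_k\to\mu(\Omega_{\alpha})$ by applying Theorem~\ref{ally} to the indicator of $\Omega_{\alpha}$ via the identity $\theta_n=||\bT^n(x_0,y_0)||$ — this is exactly the paper's argument, down to the final division by $\mu(\Omega_{\alpha})$. You add two things the paper omits. First, the explicit two-case evaluation of $\mu(\Omega_{\alpha})$, which the paper merely states; your computation (splitting at $x_*=1/\alpha-1$ when $\alpha>1/2$) is correct and is a worthwhile inclusion. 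Second, you re-derive L\'evy's theorem from Theorem~\ref{ally} using $-y_n=q_{n+1}/q_n$ and telescoping, where the paper simply cites L\'evy. The identity and the integral $\int_{\Omega}\log(-y)\,d\mu=\pi^2/(12\log 2)$ are right, but the step you flag as needing ``genuine care'' is a real one-sided gap: equidistribution of a single orbit plus truncation of the unbounded, nonnegative $g=\log(-y)$ yields only $\liminf_N \frac1N\sum g\geq\int\min(g,M)\,d\mu$ for each $M$, hence the lower bound; the matching upper bound requires controlling $\frac1N\sum(g-\min(g,M))$ along the orbit, which does not follow from Theorem~\ref{ally} alone (it is the same issue that forces the actual proof of L\'evy's theorem to go through the Ergodic Theorem with an $L^1$ function on a full-measure set). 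Since L\'evy's theorem is classical and the paper cites it directly, the cleanest fix is to do the same and drop the re-derivation, or else to invoke the pointwise Ergodic Theorem for $g\in L^1(\mu)$ rather than Theorem~\ref{ally} for that factor.
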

\begin{proof}
 
Given $x\in I_0$,  $ n_k$ is the smallest value $n$ such that $k=  \#\{j\leq n \,|\, \theta_j<\alpha\}.$  By the lemma, $k$ is precisely $\#\{j\leq n \,|\, \bT^j(x_0,y_0)\in\Omega_{\alpha}\} $.  Thus , making use of Theorem \ref{ally},
\beq\label{important}
\lim _{k\rightarrow\infty}\frac{k}{n_k}=\lim _{k\rightarrow\infty}\frac{1}{ n_k}  \#\{j\leq n_k\,|\, \bT^j(x_0,y_0)\in\Omega_{\alpha}\} \hskip .8in
 \eeq
 
 \beqq
=  \lim _{n\rightarrow\infty}\frac{1}{ n }  \#\{j\leq n \,|\, \bT^j(x_0,y_0)\in\Omega_{\alpha}\}=\mu(\Omega_{\alpha}).\hskip 1.3in
\eeqq
This last value is $( \log 2)^{-1}(1-\alpha+\log 2 +\log \alpha)$ if $\alpha>1/2$ and is $( \log 2)^{-1} \alpha $ if $\alpha \leq 1/2$. 

Using the above and  the theorem of Levy, we have 
\beqq
\frac{\pi^2}{12\log 2 }=\lim_{n\rightarrow\infty}\frac{\log q_n}{n}=\lim_{k\rightarrow\infty}\frac{\log q_{n_k}}{n_k}=\lim_{k\rightarrow\infty}\frac{\log \bar{q}_{k}}{n_k}\hskip.7in
\eeqq
\beqq
=(\lim_{k\rightarrow\infty}\frac{k}{n_k})(\lim_{k\rightarrow\infty}\frac{\log \bar{q}_{k}}{k})=\mu(\Omega_{\alpha})\lim_{k\rightarrow\infty}\frac{\log \bar{q}_{k}}{k} \hskip 1.4in
\eeqq
Which proves the proposition.
\end{proof}

Henceforth we shall write $c_{\alpha}=(\log 2\,\mu(\Omega_{\alpha}))^{-1}$.
By a similar approach, one gets the following version of D-L.
\begin{prop}
 For $\alpha\in(0,1]$ and for almost all $x\in (0,1)$    the sequence $ \bar{\Theta}(\alpha)$ is distributed in the unit interval according to the density function \\
$c_{\alpha}(2\log 2 )^{-1}\zeta^{-1}(1-|1-2\zeta|).$ 
 \end{prop}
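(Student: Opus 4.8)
The plan is to read the density off Theorem \ref{ally} and Lemma \ref{lemma1}, in exactly the way the preceding proposition was proved; no new dynamical ingredient is needed. Fix $x$ in a full measure subset of $I_0$ on which, in addition, $\theta_n<\alpha$ for infinitely many $n$ (true a.e.\ by Khinchin, as noted after Lemma \ref{lemma1}), and let $n_k$ be the $k$-th index with $\theta_{n_k}<\alpha$. By Lemma \ref{lemma1}, $\btheta_k=\theta_{n_k}=||\bT^{n_k}(x_0,y_0)||$, and $\theta_{n_k}<\alpha$ is the same as $\bT^{n_k}(x_0,y_0)\in\Oa$; thus $\bar\Theta(\alpha)$ is exactly the subsequence of $\{\theta_n\}=\{||\bT^n(x_0,y_0)||\}$ consisting of the terms in $(0,\alpha)$. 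For $0<\zeta\le\alpha$ set $B_\zeta=\{(u,v)\in\Omega:\ ||(u,v)||\le\zeta\}=\{(u,v)\in\Omega:\ u-v\ge 1/\zeta\}\subseteq\Oa$. Since $\partial B_\zeta$ and $\partial\Oa$ lie on smooth level curves of $||\cdot||$, both are $\mu$-continuity sets, so Theorem \ref{ally} applies and gives
\beqq
\frac1N\#\{n\le N:\ \theta_n\le\zeta\}\longrightarrow\mu(B_\zeta)\qquad\text{and}\qquad\frac1N\#\{n\le N:\ \theta_n<\alpha\}\longrightarrow\mu(\Oa).
\eeqq
As $n_k\to\infty$, taking $N=n_K$ (so $\#\{n\le n_K:\ \theta_n<\alpha\}=K$ and $\#\{n\le n_K:\ \theta_n\le\zeta\}=\#\{k\le K:\ \btheta_k\le\zeta\}$) and dividing the two limits yields
\beqq
\lim_{K\to\infty}\frac1K\#\{k\le K:\ \btheta_k\le\zeta\}=\frac{\mu(B_\zeta)}{\mu(\Oa)}\qquad(0<\zeta\le\alpha),
\eeqq
so $\bar\Theta(\alpha)$ has limiting distribution function $\zeta\mapsto\mu(B_\zeta)/\mu(\Oa)$ on $(0,\alpha)$ and $\equiv1$ beyond; in particular its density is supported on $(0,\alpha)$, consistent with $\btheta_k<\alpha$. (It is the equidistribution in Theorem \ref{ally}, not the plain ergodic theorem, that is needed, since we track the single orbit through $(x_0,y_0)$.)

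It remains to evaluate $\mu(B_\zeta)=(\log 2)^{-1}\iint_{B_\zeta}(u-v)^{-2}\,dA$ and differentiate --- the same elementary area integral as in the original D--L argument. Integrating first in $v$ over $\{v<-1,\ v\le u-1/\zeta\}$ and splitting $u\in(0,1)$ according to whether $u-1/\zeta<-1$ holds on all of $(0,1)$ (equivalently $\zeta\le\tfrac12$) or not, one gets $\mu(B_\zeta)=(\log 2)^{-1}\zeta$ for $\zeta\le\tfrac12$ and $\mu(B_\zeta)=(\log 2)^{-1}(1-\zeta+\log 2+\log\zeta)$ for $\tfrac12\le\zeta\le\alpha$. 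Dividing by $\mu(\Oa)$ and differentiating, the limiting density of $\bar\Theta(\alpha)$ on $(0,\alpha)$ equals $c_\alpha$ for $\zeta\le\tfrac12$ and $c_\alpha(\zeta^{-1}-1)$ for $\zeta\ge\tfrac12$ --- i.e.\ the Doblin--Lenstra density $(2\log2)^{-1}\zeta^{-1}(1-|1-2\zeta|)$ scaled by $\mu(\Oa)^{-1}$ and restricted to $(0,\alpha)$, as claimed. That $\mu(B_\alpha)=\mu(\Oa)$ confirms it integrates to $1$, and, with the two expressions for $\mu(\Oa)$ computed in the proof of the preceding proposition, it pins down the constant.

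The only delicate point is the conditioning step in the first paragraph --- extracting the subsequence's distribution from that of $\{\theta_n\}$ --- which rests entirely on the identification $\btheta_k=\theta_{n_k}$ from Lemma \ref{lemma1} together with $n_k\to\infty$; the rest is the classical area computation. Equivalently one can run everything through the first-return map $\bT_\alpha$, whose normalized invariant density on $\Oa$ is $c_\alpha(u-v)^{-2}$, and push it forward under $||\cdot||$; that gives the same answer and will be needed in any case for the sequence of pairs.
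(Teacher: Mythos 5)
Your proof is correct and is exactly the argument the paper intends: the paper gives no proof of this proposition at all, saying only that it follows ``by a similar approach'' to the L\'evy-type proposition, and your combination of Lemma \ref{lemma1}, Theorem \ref{ally}, the ratio $k/n_k\to\mu(\Oa)$ from (\ref{important}), and the classical area computation of $\mu(B_\zeta)$ is precisely that approach carried out in detail. One remark: the density you actually derive, $c_{\alpha}\,2^{-1}\zeta^{-1}(1-|1-2\zeta|)=\mu(\Oa)^{-1}(2\log 2)^{-1}\zeta^{-1}(1-|1-2\zeta|)$, integrates to $1$ over $(0,\alpha)$, whereas the density as printed in the proposition, $c_{\alpha}(2\log 2)^{-1}\zeta^{-1}(1-|1-2\zeta|)$, integrates to $(\log 2)^{-1}$; so your normalization is the correct one and the printed statement carries a spurious factor of $(\log 2)^{-1}$.
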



\subsection{Ergodic theory of the first return maps}  
 
The $\bT$ invariant  measure $\mu$ restricts to an invariant measure for ${\bf T}_{\alpha}$   on $\Omega_{\alpha}$, with respect to which $\bT_{\alpha}$ is ergodic \cite{keller}.  We normalize to get the invariant probability measure $\mu_{\alpha}=c_{\alpha}(x-y)^{-2}\,dA$. 
Of particular importance is the following result, the proof of which depends on Theorem \ref{ally} and the same trick used above.

\begin{thm}\label{ally2}
For almost all $x\in (0,1)$ and $y\in (-\infty, -1)$, the sequence of points $\{{\bf T}^n_{\alpha}(x,y)\}$  is distributed in $\Omega_{\alpha}$ according to the density function $ f_{\alpha}(x,y)=c_{\alpha}(x-y)^{-2} $.
   \end{thm}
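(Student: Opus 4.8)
The plan is to deduce this from Theorem~\ref{ally} by exactly the device used to establish \eqref{important}: by construction the $\bT_\alpha$-orbit of a point is the subsequence of its $\bT$-orbit consisting of the iterates that land in $\Oa$, so equidistribution of the whole $\bT$-orbit should force equidistribution of that subsequence with respect to the conditional measure $\mu_\alpha$. Fix $x$ in the full measure set $I_0$ of Theorem~\ref{ally} and $y\in(-\infty,-1)$. By that theorem the iterates $\bT^j(x,y)$ lying in $\Oa$ have positive density (as $\mu(\Oa)>0$), so the first entry time into $\Oa$ and all subsequent return times are finite; deleting the finitely many terms before the first entry changes neither the hypotheses nor the asymptotic distribution, so we may assume $(x,y)\in\Oa$. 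Let $0=j_0<j_1<j_2<\cdots$ enumerate the indices $j$ with $\bT^j(x,y)\in\Oa$, so that $\bT^k_\alpha(x,y)=\bT^{j_k}(x,y)$ for every $k\ge0$.

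Next I would test against a Borel set $E\subseteq\Oa$ with $\mu(\partial E)=0$ --- for instance a subrectangle of $\Oa$, or $\Oa$ itself, whose boundary lies on the line $x-y=1/\alpha$ together with $\partial\Omega$ and is therefore Lebesgue-null. Writing $K(n)=\#\{\,j\le n\mid\bT^j(x,y)\in\Oa\,\}$ for the number of return times not exceeding $n$, the inclusion $E\subseteq\Oa$ gives, for every $n$,
\beqq
\#\{\,0\le k<K(n)\mid\bT^k_\alpha(x,y)\in E\,\}=\#\{\,j\le n\mid\bT^j(x,y)\in E\,\},
\eeqq
since membership in $E$ forces membership in $\Oa$. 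Dividing by $K(n)$ and applying Theorem~\ref{ally} to $E$ and to $\Oa$ separately yields
\beqq
\frac{1}{K(n)}\#\{\,0\le k<K(n)\mid\bT^k_\alpha(x,y)\in E\,\}=\frac{\tfrac1n\#\{\,j\le n\mid\bT^j(x,y)\in E\,\}}{\tfrac1n\#\{\,j\le n\mid\bT^j(x,y)\in\Oa\,\}}\longrightarrow\frac{\mu(E)}{\mu(\Oa)}.
\eeqq
Specializing to $n=j_k$, for which $K(j_k)=k+1$, this reads $\tfrac1{k+1}\#\{\,0\le k'\le k\mid\bT^{k'}_\alpha(x,y)\in E\,\}\to\mu(E)/\mu(\Oa)$. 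Since $c_\alpha=(\log2\,\mu(\Oa))^{-1}$, on $\Oa$ we have $\mu_\alpha=\mu/\mu(\Oa)$, so the limit equals $\mu_\alpha(E)=\int_E f_\alpha\,dA$; as this holds for a family of sets rich enough to determine the distribution, $\{\bT^n_\alpha(x,y)\}$ is distributed in $\Oa$ according to $f_\alpha$.

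The one point that needs care --- and where the argument would break if handled carelessly --- is the passage to the subsequence $n=j_k$ in the displayed ratio: it is essential that the numerator $\tfrac1n\#\{j\le n\mid\bT^j(x,y)\in E\}$ and the denominator $\tfrac1n\#\{j\le n\mid\bT^j(x,y)\in\Oa\}$ converge individually, to $\mu(E)$ and to $\mu(\Oa)>0$ respectively, and not merely that their ratio converges; this is precisely the force of Theorem~\ref{ally}. Everything else is bookkeeping: the identity relating the two counts is just the fact that $E\subseteq\Oa$, and one needs only the elementary observation that $\Oa$ and its subrectangles are $\mu$-continuity sets so that Theorem~\ref{ally} genuinely applies to them.
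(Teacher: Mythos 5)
Your proof is correct, and it is organized differently from the paper's. The paper starts from the Birkhoff Ergodic Theorem applied to the induced map $\bT_{\alpha}$ (whose ergodicity with respect to $\mu_{\alpha}$ is quoted from Keller), which gives the equidistribution statement for almost every $(x,y)\in\Oa$; it then uses the identity behind \eqref{important} to rewrite the return--map Birkhoff average as a product of two full--orbit averages and invokes Theorem~\ref{ally} only to conclude that the limit is independent of $y$, so that ``a.e.\ $(x,y)$'' can be upgraded to ``$x\in I_0$, all $y$'' (modulo the same measure--theoretic closing step you also wave at). You instead bypass the ergodicity of $\bT_{\alpha}$ entirely: since $E\subseteq\Oa$ forces the counting identity $\#\{k<K(n)\mid\bT^k_{\alpha}(x,y)\in E\}=\#\{j\le n\mid\bT^j(x,y)\in E\}$, Theorem~\ref{ally} applied separately to $E$ and to $\Oa$ (both $\mu$--continuity sets) already yields convergence of the ratio to $\mu(E)/\mu(\Oa)=\mu_{\alpha}(E)$ for every $y$ at once. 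The computation at the heart of both arguments is the same normalization $k/n_k\to\mu(\Oa)$, but your version is more self-contained --- it needs only Theorem~\ref{ally} as input, and it avoids the slightly delicate transfer step ``if the limit holds for $(x,y)$ it holds for $(x,y')$'' --- while the paper's version makes explicit the role of $\mu_{\alpha}$ as the ergodic invariant measure of the induced system, which is reused later. Your closing caveat about needing the numerator and denominator to converge individually (not just their ratio) is exactly the right point to flag; it is where the full strength of Theorem~\ref{ally}, rather than the bare Ergodic Theorem, is used.
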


\begin{proof}

Let $\bB$ be a Borel set in $\Oa$ with boundary of zero measure and suppose the limit
\beq\label{lim2}
\lim_{k\rightarrow\infty}\frac{1}{k} \#\{0<j<k\,|\, \bT_{\alpha}^j(x,y)\in \bB \}.\hskip 1.7in
\eeq
 exits and is equal to $\mu_{\alpha}(\bB)$. 
As a consequence of the Ergodic Theorem this holds for almost all $(x,y)\in\Omega_{\alpha}$.
Using formula (\ref{important}), the limit  (\ref{lim2}) can be rewritten
\beq\label{lim1}
\left(\lim_{k\rightarrow\infty}\frac{n_k}{k}\right )\, \left (\lim_{k\rightarrow\infty}\frac{1}{n_k} \#\{0<j<n_k\,|\, \bT^i(x,y)\in \bB \}\right )=\hskip .8in 
\eeq
\beqq
(c_{\alpha}\log 2)\,\lim_{n\rightarrow\infty}\frac{1}{n} \#\{0<j<n\,|\, \bT^j(x,y)\in \bB \}=(c_{\alpha}\log 2)\,\mu(B)=\mu_{\alpha}(B)
\eeqq
By Theorem \ref{ally}, if this  holds for $(x,y)$ then it will hold for $(x,y')$ for any $y'\in(-\infty,-1)$. 
Thus the limit (\ref{lim2}) also hold for $(x,y')$ for any $y'\in(-\infty,-1)$. Modulo some simple measure theoretic considerations, this implies the theorem.

\end{proof}

\section{The first return maps and the distribution of theta pairs}

In this section we turn to the space of pairs of the form $\{(\bar{\theta}_n,\bar{\theta}_{n+1})\}$and see how Jager's approach can be modified to derive the distribution function for the generic sequence of pairs.  
\subsection{Theta pairs for $\alpha\geq 1/2$}
The natural domain for the pairs, when $\alpha$ is taken to be greater than or equal to $1/2$,  is the set
\beqq
\Lambda_{\alpha}=\{(w,z)\in\RR^2\,|\, 0<w<\alpha,\, 0<z<\alpha,\, w+z<1\}\, 
\eeqq
Define
$
 \Lambda_{\alpha}^-=\{(w,z)\in \Lambda_{\alpha} \,|\, z<w-\alpha+\sqrt{1-4\alpha w}\}$ and $\Lambda_{\alpha}^+=\Lambda_{\alpha}\setminus \Lambda_{\alpha}^-.
$
 

On $\Lambda_{\alpha}$ we have the density function
\beqq
\lambda_{\alpha}(w,z)=
 \left\{ \begin{array}{ll} 
 c_{\alpha}(\sqrt{1-4\alpha zw})^{-1} & {\rm if} \, (w,z)\in \Lambda_{\alpha}^+ \\
 \\
  c_{\alpha}\big ( (\sqrt{1-4\alpha zw} )^{-1}+  ( \sqrt{1+4\alpha zw} )\big)^{-1}  &  {\rm if}\,  (w,z)\in \Lambda_{\alpha}^-.
\end{array} \right.
\eeqq

Jager's description of the distribution of approximating pairs becomes 
 \begin{thm}\label{talpha}
 For $\alpha\geq 1/2$ and almost all $x\in (0,1)$, the sequence $\{\bar{\theta}_k, \bar{\theta}_{k+1}\}$ is distributed in the region $\Lambda_{\alpha}$
 according to the density function $\lambda_{\alpha}(w,z)$. 
  In other words, for almost all $x\in (0,1)$ and for any Borel subset $B$ of $\Lambda(\alpha)$ with boundary of measure zero
 \beqq
 \lim_{n\rightarrow\infty}\frac{1}{n}\#\{j\leq n\, |\, (\bar{\theta}_j,\bar{\theta}_{j+1})\in B\}=\int_B d\lambda_{\alpha}=\lambda_{\alpha}(B).
 \eeqq
 \end{thm}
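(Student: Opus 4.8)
The plan is to run Jager's argument with $\bT$ replaced by $\bT_\alpha$, Theorem~\ref{ally2} playing the role that Theorem~\ref{ally} plays in his treatment. Put $(\bx_k,\by_k)=\bT_\alpha^k(x_0,y_0)$. By Lemma~\ref{lemma1} we have $\btheta_k=\|(\bx_k,\by_k)\|$ and $\btheta_{k+1}=\|\bT_\alpha(\bx_k,\by_k)\|$, so $(\btheta_k,\btheta_{k+1})\in B$ exactly when $(\bx_k,\by_k)\in E_B$, where
\[
E_B=\{(x,y)\in\Oa:\ (\,\|(x,y)\|,\ \|\bT_\alpha(x,y)\|\,)\in B\}.
\]
Applying Theorem~\ref{ally2} to the Borel set $E_B$ at the point $(x_0,y_0)$ (legitimate for $x$ in a full-measure set, as noted after Theorem~\ref{ally}) shows that for almost every $x$ the frequency in the statement equals $\mu_\alpha(E_B)$. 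The task therefore reduces to proving
\[
\mu_\alpha(E_B)=\lambda_\alpha(B)\qquad\text{for every Borel }B\subseteq\Lambda_\alpha\text{ with }\lambda_\alpha(\partial B)=0,
\]
with the side remark that $\partial E_B$ is $\mu_\alpha$-null, which is automatic once $E_B$ is exhibited below as a finite union of preimages of $B$ under piecewise-smooth maps of nonvanishing Jacobian.

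The first real step is to bound the return time. For every $n$, $\theta_n+\theta_{n+1}<1$; equivalently, on all of $\Omega$,
\[
\|(x,y)\|+\|\bT(x,y)\|=\frac{1}{x-y}+\frac{-xy}{x-y}=\frac{1-xy}{x-y}<1
\]
since $1-xy-(x-y)=(1-x)(1+y)<0$. Hence, once $\alpha\ge\tfrac12$, two consecutive values $\theta_n,\theta_{n+1}$ are never both $\ge\alpha$, so $\tau_\alpha\in\{1,2\}$ and, up to a null set, $\Oa=\Oa^1\cup\Oa^2$ with $\Oa^1=\{(x,y)\in\Oa:\bT(x,y)\in\Oa\}$ and $\Oa^2=\Oa\setminus\Oa^1$: on $\Oa^1$ one has $\bT_\alpha=\bT$, and on $\Oa^2$ one has $\bT_\alpha=\bT^2$. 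On $\Oa^2$ the requirement $\|\bT(x,y)\|\ge\alpha\ge\tfrac12$ means $\tfrac1x-\tfrac1y\le 2$, hence $\tfrac1x<2$, hence $[1/x]=1$, so $\bT$ (and therefore $\bT^2$) is given there by a single smooth formula. Consequently $E_B\cap\Oa^1=\psi^{-1}(B)\cap\Oa^1$ and $E_B\cap\Oa^2=\psi_2^{-1}(B)\cap\Oa^2$, where $\psi(x,y)=\bigl(\tfrac1{x-y},\ \tfrac{-xy}{x-y}\bigr)$ (using $\|\bT(x,y)\|=(1/x-1/y)^{-1}$) and $\psi_2(x,y)=\bigl(\|(x,y)\|,\ \|\bT^2(x,y)\|\bigr)$, each smooth on the relevant piece.

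Next I would analyse the two maps and change variables. For $\psi$ on $\Oa^1$: given $(w,z)\in\Lambda_\alpha$, the relations $x-y=1/w$ and $xy=-z/w$ present $x$ and $-y$ as the two roots of $w\xi^2-\xi+z=0$, and the constraints $0<x<1<-y$ become (after squaring) exactly $w+z<1$; thus $\psi:\Oa^1\to\Lambda_\alpha$ is a bijection. Its Jacobian is $(x+y)(x-y)^{-3}$, and inserting $x-y=1/w$, $-(x+y)=\sqrt{1-4wz}/w$ pushes $c_\alpha(x-y)^{-2}\,dA$ forward to the single-sheet part of $\lambda_\alpha$ on $\Lambda_\alpha$. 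For $\psi_2$ on $\Oa^2$: using $[1/x]=1$ one finds $\|\bT^2(x,y)\|=\frac{(1-x)(1-y)}{x-y}$, which yields the identity
\[
\btheta_{k+1}=\btheta_k-\vartheta+\sqrt{1-4\,\btheta_k\,\vartheta},\qquad \vartheta:=\|\bT(x,y)\|\ \ge\ \alpha,
\]
$\vartheta$ being the skipped value. Since the right-hand side strictly decreases in $\vartheta$, its supremum over $\vartheta\ge\alpha$ is attained at $\vartheta=\alpha$ and equals $\btheta_k-\alpha+\sqrt{1-4\alpha\btheta_k}$; hence $\psi_2$ maps $\Oa^2$ bijectively onto $\Lambda_\alpha^-$, the case $\vartheta=\alpha$ tracing exactly the boundary curve $z=w-\alpha+\sqrt{1-4\alpha w}$ in the definition of $\Lambda_\alpha^-$. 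A second change of variables — Jacobian of $\psi_2$ equal to $(2-x-y)(x-y)^{-3}$, and $\vartheta=\sqrt{1+4wz}-(w+z)$ when expressed in $(w,z)$ — then produces the extra sheet on $\Lambda_\alpha^-$. Adding the two contributions, $\psi$ alone over $\Lambda_\alpha^+$ and both maps over $\Lambda_\alpha^-$, gives $\mu_\alpha(E_B)=\lambda_\alpha(B)$.

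The hard part is the $\Oa^2$ analysis: reducing $\|\bT^2(x,y)\|$ to the closed form above and hence $\vartheta$ to $\sqrt{1+4wz}-(w+z)$, checking that $\psi_2$ is injective with image precisely $\Lambda_\alpha^-$ (in particular that $\vartheta=\alpha$ produces the stated boundary curve), and carrying out the second Jacobian so that the $\Oa^1$- and $\Oa^2$-contributions assemble into the two-piece density $\lambda_\alpha$; a useful final check is that $\lambda_\alpha$ then integrates to $1$ over $\Lambda_\alpha$. The $\Oa^1$ half is, up to the normalization $c_\alpha$, Jager's original calculation — and it is all there is when $\alpha=1$, since $\Oa^2$ is then empty.
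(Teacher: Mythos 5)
Your proposal is correct and follows essentially the same route as the paper, which distributes the argument over Lemma \ref{characterizeT1} (the $\tau_\alpha\in\{1,2\}$ dichotomy), Proposition \ref{prop} (the maps $F^+$, $F^-$ — your $\psi$, $\psi_2$ — and their inverses ${\bf H}$, ${\bf H}^-$), and Theorems \ref{ally2} and \ref{dist} (equidistribution under $\bT_\alpha$ plus the two-sheet Jacobian computation). The only cosmetic difference is that you obtain the dichotomy from the inequality $\theta_n+\theta_{n+1}<1$ rather than by explicitly computing $\bT^{-1}(\nabla_\alpha)$ as the paper does, and your Jacobian bookkeeping in fact confirms that the intended density on $\Lambda_\alpha^-$ is $c_\alpha\bigl((1-4wz)^{-1/2}+(1+4wz)^{-1/2}\bigr)$.
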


\subsection{ The structure of the first return maps for $\alpha\geq 1/2$ }
Supposing $\alpha\geq 1/2$, define the following sets:
 \beqq
 \Oa^-=\{(x,y)\in\Omega\,|\, y\leq\frac{\alpha x}{\alpha-x}\},\,\hskip 2.2in
 \eeqq
 \beqq
 \nabla_{\alpha}=\{(x,y)\in\Oa\,|\, y\geq x-\frac{1}{\alpha}\}= \Omega\setminus \Oa\,\text{and}\,
 \Oa^+=\Omega\setminus\left ( \Oa^-\cup \nabla_{\alpha}\right ).
 \eeqq 
 

  As usual we denote the closure of a set with an overline. Let $\nabla^*_{\alpha}$ denote the union of $\nabla_{\alpha}$ and its boundary along the curves $x=0$ and $x=y+1/\alpha.$

 \begin{lemma}\label{characterizeT1}
 When $\alpha\geq 1/2$, $\bT$ is a bijection, mapping  $\overline{\Omega_{\alpha}^-}$ onto  $ \nabla^*_{\alpha} $. Consequently, there is a simple dicotomy describing $\bT_{\alpha}:$
 \beqq
\bT_{\alpha}(x,y)=
 \left\{ \begin{array}{ll} 
\bT(x,y)& {\rm if} \, (x,y)\in \Omega_{\alpha}^+ \\
\\
\bT^2(x,y)& {\rm if} \, (x,y)\in \Omega_{\alpha}^- .
\end{array} \right.\hskip 1in
\eeqq
\end{lemma}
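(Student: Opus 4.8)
The plan is to verify directly the claim that $\bT$ maps $\overline{\Omega_\alpha^-}$ bijectively onto $\nabla_\alpha^*$, and then to read off the dichotomy from the definition of the first return map. The key observation is that $\Omega_\alpha = \Omega \setminus \overline{\nabla_\alpha}$, so a point of $\Omega_\alpha$ fails to return to $\Omega_\alpha$ on the first step of $\bT$ precisely when its image lands in $\overline{\nabla_\alpha}$; thus the set of "slow returners" is exactly $\bT\inv(\overline{\nabla_\alpha}) \cap \Omega_\alpha$, and the content of the lemma is that this set equals $\overline{\Omega_\alpha^-}$ (up to measure zero / boundary bookkeeping). So the first step is to compute $\bT\inv(\nabla_\alpha)$ explicitly. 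Since $\bT(x,y) = (1/x - [1/x],\, 1/y - [1/x])$, I would fix the digit $a = [1/x]$ on a vertical strip $x \in (1/(a+1), 1/a)$, invert the (affine-in-$y$, Möbius-in-$x$) map there, and translate the two defining inequalities of $\nabla_\alpha$ — namely $x' < 1$ (automatic) and $y' \geq x' - 1/\alpha$ — back through $\bT$. The inequality $y' \geq x' - 1/\alpha$ pulls back to a relation between $x$ and $y$; I expect it to collapse, after clearing denominators, to the single curve $y \le \frac{\alpha x}{\alpha - x}$ that defines $\Omega_\alpha^-$, with the $a=1$ strip being the only one that contributes (for $\alpha \geq 1/2$, the region $\Omega_\alpha^-$ sits in $x$ close to $\alpha$, hence in the strip $x \in (1/2,1)$ where $[1/x]=1$). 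This is where the hypothesis $\alpha \geq 1/2$ is essential and where the bulk of the computation lives.

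Next I would check that $\bT$ restricted to $\overline{\Omega_\alpha^-}$ is actually a bijection onto $\nabla_\alpha^*$, not merely a surjection onto it. Injectivity is immediate because on the single strip $x \in (1/2,1)$ the Gauss map $T(x) = 1/x - 1$ is injective and $y \mapsto 1/y - 1$ is injective, so $\bT$ is injective there; surjectivity onto all of $\nabla_\alpha^*$ follows by running the boundary-curve computation in reverse, matching the two boundary arcs of $\Omega_\alpha^-$ (the pieces of $x=\alpha$-type curve and of $\partial\Omega$) with the two boundary arcs $x=0$ and $x = y + 1/\alpha$ of $\nabla_\alpha^*$ — this is the reason the lemma is careful to include those particular boundary pieces in $\nabla_\alpha^*$. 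Here it is worth noting that $\nabla_\alpha \subset \Omega$ but $\nabla_\alpha$ is \emph{not} contained in $\Omega_\alpha$ (indeed $\nabla_\alpha = \Omega \setminus \Omega_\alpha$ up to boundary), so a point of $\Omega_\alpha^-$ genuinely leaves $\Omega_\alpha$ after one step of $\bT$.

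Finally, the dichotomy: if $(x,y) \in \Omega_\alpha^+$, then by definition $\bT(x,y) \notin \overline{\nabla_\alpha}$, i.e.\ $\bT(x,y) \in \Omega_\alpha$, so $\tau_\alpha(x,y) = 1$ and $\bT_\alpha(x,y) = \bT(x,y)$. If $(x,y) \in \overline{\Omega_\alpha^-}$, then $\bT(x,y) \in \nabla_\alpha^* \subset \Omega \setminus \Omega_\alpha$, so the return has not yet happened and $\tau_\alpha(x,y) \geq 2$; it then suffices to show $\bT^2(x,y) \in \Omega_\alpha$, i.e.\ $\tau_\alpha = 2$ exactly. For this I would use the first return map on the complementary region: $\bT$ maps $\nabla_\alpha^*$ into $\Omega$, and one more application of $\bT$ must land back in $\Omega_\alpha$ because $\bT(\nabla_\alpha^*)$ meets $\overline{\Omega_\alpha^-}$ in at most a measure-zero set — equivalently, no point makes two consecutive excursions, which one sees from the fact that $\bT(\nabla_\alpha) \cap \overline{\Omega_\alpha^-} = \emptyset$ up to boundary (a point in $\nabla_\alpha$ has $x$ arbitrary but $y \ge x - 1/\alpha$, and chasing this through $\bT$ keeps the image out of the small corner $\Omega_\alpha^-$). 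The main obstacle is the explicit inversion in the first paragraph: keeping track of which digit strips $[1/x] = a$ actually contribute and verifying that the pulled-back inequality simplifies exactly to the stated curve $y \le \frac{\alpha x}{\alpha - x}$, together with the parallel boundary-matching needed for genuine bijectivity; everything after that is formal manipulation of the definition of $\tau_\alpha$.
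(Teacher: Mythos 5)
Your overall strategy is the same as the paper's: the substance of the lemma is the identification of $\bT^{-1}(\nabla_{\alpha})$ with $\overline{\Omega_{\alpha}^-}$, carried out on the single digit strip $a=1$. The paper runs this computation backwards, applying the inverse branch $(x,y)\mapsto\bigl(1/(x+1),\,1/(y+1)\bigr)$ to $\nabla_{\alpha}$ and checking that the diagonal $x=y+1/\alpha$ goes to the curve $y=\alpha x/(\alpha-x)$ and the line $x=0$ to $x=1$; your forward pull-back of the inequality $y'\ge x'-1/\alpha$ through the strip $[1/x]=1$ is the same computation in the other direction, and your boundary-matching remarks are exactly what the paper verifies. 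One small point: the clean reason that only the $a=1$ branch contributes is that $\alpha\ge 1/2$ forces $\nabla_{\alpha}\subset I\times(-2,-1)$, so the inverse branch is determined by $[y]=-2$; your justification via ``$\Omega_{\alpha}^-$ sits in the strip $x\in(1/2,1)$'' presupposes the answer, though it is easily repaired.

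The one genuine flaw is in the final step. For $(x,y)\in\Omega_{\alpha}^-$ you must show that $w=\bT(x,y)\in\nabla_{\alpha}$ does not lie in $\bT^{-1}(\nabla_{\alpha})$, which by the bijection is contained in $\overline{\Omega_{\alpha}^-}$; the disjointness you need is therefore $\nabla_{\alpha}\cap\overline{\Omega_{\alpha}^-}=\emptyset$, which is immediate since $\Omega_{\alpha}^-\subset\Oa$ while $\nabla_{\alpha}=\Omega\setminus\Oa$. That one-line observation is precisely how the paper closes the proof. You instead invoke $\bT(\nabla_{\alpha})\cap\overline{\Omega_{\alpha}^-}=\emptyset$. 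That is a different statement: it concerns where $\bT^2(x,y)$ lands \emph{within} $\Oa$ (namely, that the orbit does not immediately re-enter the slow region), not whether $\bT^2(x,y)$ lands in $\Oa$ at all, so it does not yield the conclusion; and unlike the correct disjointness it is not free, but would require the extra ``chasing through $\bT$'' computation you allude to. Replace it with the trivial disjointness of $\nabla_{\alpha}$ and $\Omega_{\alpha}^-$ and the argument is complete.
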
 
 
 \begin{proof}
 
 We consider the action of $\bT^{-1}$ on $ \nabla_{\alpha}$. Since $1/2\leq \alpha\leq 1$, $[y]+1=-1.$ 
 It follows that on   $\nabla_{\alpha}$
 \beqq
(u,v)= \bT^{-1}(x,y)=\bigg ( \frac{1}{x-[y]-1},  \frac{1}{x-[y]-1}\bigg)\,=\, \bigg ( \frac{1}{x+1}, \frac{1}{y+1}\bigg ).
 \eeqq 
 Note that this extends to the diagonal $x=y+1/\alpha$, as well as the line $x=0$
 
 On the diagonal we have 
 \beqq
 (u,v)= \bT^{-1}(x,y) = \bigg ( \frac{1}{y+1+\frac{1}{\alpha}}, \frac{1}{y+1}\bigg ).\hskip .8in 
 \eeqq
$u$ and $v$ are thus related by the equation $\frac{{\alpha} u}{\alpha-u}=v$, which is the curve $\gamma$ bounding $\Omega_{\alpha}^- .$ 
 
 Setting $x=0$ gives the line $u=1$. As $y$ approaches -1, $v$ goes to infinity along the vertical line with  $u=1/(x+1)$. In fact the whole strip $-2\leq y< -1$ maps to the strip $1/2\leq x  <1$. It follows that $\bT^{-1}$ is a bijection of $\nabla_{\alpha}$ onto $\Omega_{\alpha}^- $ which extends to the above line segments. Therefore the inverse is a bijection as stated, extending to the boundary of   $\Omega_{\alpha}^- $.
 
 The final assertion of the lemma follows easily. If $(x,y)\in \Omega_{\alpha}^+ $ then $\bT(x,y)\not\in \nabla_{\alpha}$. Therefore $\bT(x,y) \in \Oa$, as asserted. 
 On the other hand if $(x,y)\in \Omega_{\alpha}^- $ then $\bT(x,y)\in \nabla_{\alpha}$; in particular,  $\bT(x,y)\not\in \Oa$. But since $\nabla_{\alpha}$ is disjoint from  $\Omega_{\alpha}^- $,  $\bT^2(x,y)\in \Oa$. 
 
  \end{proof} 
 
 \subsection{From $\Oa$ to the domain of theta pairs}
 
 Define
 \beqq
 F^+(x,y)=\frac{- x y }{ x -y }\,\,\,\text{and}\,\,\, 
 F^+(x,y) =\frac{(1- x )(1- y )}{ x -y }\hskip .4in
  \eeqq
 and then set
\beqq
\bF( x,y) = 
  \left\{ \begin{array}{ll} 
  \big(||( x,y)||,  F^+( x,y)\big)   & {\rm if} \,  ( x,y)\in \Omega_{\alpha}^+ \\
 \big(||( x,y)||,  F^-( x,y)\big)  & {\rm if} \,  ( x,y)\in\Omega_{\alpha}^- .
\end{array} \right.\hskip .5in
\eeqq

The next proposition elucidates the relationship between the space $\Oa$ and the natural domain for the theta pairs. In effect it allows us to equate the $\bT_{\alpha}$-orbit of a point in $\Oa$ with a sequence of pairs in $\Lambda_{\alpha}$. It is an easy step from here to the proof of Theorem \ref{talpha}.
\begin{prop}\label{prop}.\\
a) $\bF( x,y)=(w,z)$ maps $\Oa$ injectively onto   $\Lambda_{\alpha}$. Its inverse is given by
\beqq
{\bf H} ( w,z)=\bigg( \frac{1-\sqrt{1-4wz}}{2w},\frac{-1-\sqrt{1-4wz}}{2w}\bigg )\hskip 1.1in 
 \eeqq
 b)  $\bF( x,y)=(w,z)$ maps $\Oa^-$ injectively onto $\Lambda^-_{\alpha}$. Its inverse is given by
\beqq
{\bf H}^{-} (w,z)=\bigg( \frac{2w+1-\sqrt{1+4wz}}{2w},\frac{2w-1-\sqrt{1+4wz}}{2w}\bigg )\hskip .4in
\eeqq
c) Furthermore, for almost all $x\in (0,1) $ and $(x_0,y_0)=\bT(x,\infty)$, 
\beqq
\bF(\bar{x}_k,\bar{y}_k)=\bF\big(\bT_{\alpha}^k(x_0,y_0)\big)=(\btheta_k,\btheta_{k+1}).\hskip .3in
\eeqq

\end{prop}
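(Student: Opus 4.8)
The plan is to treat parts (a), (b), (c) largely separately, since (a) and (b) are purely algebraic identities about the maps $\bF$ and $\bT$, while (c) combines them with Lemma \ref{lemma1}, Lemma \ref{characterizeT1}, and the characterization \eqref{forkhin} of $\theta_n$ in terms of the natural extension. I would begin with part (a): the claim is that $\bF$ restricted to $\Oa^+$, where $\bF(x,y)=\big(||(x,y)||,F^+(x,y)\big)=\Big(\tfrac{1}{x-y},\tfrac{-xy}{x-y}\Big)$, is a bijection onto $\Lambda_\alpha$ with inverse $\bH$. The cleanest route is to verify directly that $\bH\circ\bF=\mathrm{id}$ and $\bF\circ\bH=\mathrm{id}$ by substitution. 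Writing $w=\tfrac{1}{x-y}$ and $z=\tfrac{-xy}{x-y}$, one computes $4wz = \tfrac{-4xy}{(x-y)^2}$, so $1-4wz=\tfrac{(x-y)^2+4xy}{(x-y)^2}=\tfrac{(x+y)^2}{(x-y)^2}$, whence $\sqrt{1-4wz}=\tfrac{x-(-y)}{x-y}\cdot\ldots$ — more precisely $\sqrt{1-4wz}=\tfrac{-(x+y)}{x-y}$ since $x>0>y$ forces $x+y$ to have the appropriate sign on $\Oa^+$ (this sign check is where one must be slightly careful). Then $\tfrac{1-\sqrt{1-4wz}}{2w}=\tfrac{(x-y)+(x+y)}{2}\cdot\ldots = x$ and similarly the second coordinate returns $y$. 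One then checks that $\bF$ maps $\Oa^+$ into $\Lambda_\alpha$ (the condition $\tfrac{1}{x-y}<\alpha$ gives $w<\alpha$; the inequalities $z<\alpha$ and $w+z<1$ come from unwinding the definitions of $\Oa^-$ and $\nabla_\alpha$) and is onto.

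For part (b) the argument is structurally identical but with $F^-(x,y)=\tfrac{(1-x)(1-y)}{x-y}$ in place of $F^+$; here one sets $w=\tfrac{1}{x-y}$, $z=\tfrac{(1-x)(1-y)}{x-y}$, computes $1+4wz=\tfrac{(x-y)^2+4(1-x)(1-y)}{(x-y)^2}$, simplifies the numerator to a perfect square $(x+y-2)^2$, takes the square root with the correct sign on $\Oa^-$, and recovers $\bH^-$. One also checks $\bF(\Oa^-)=\Lambda_\alpha^-$: the defining inequality $z<w-\alpha+\sqrt{1-4\alpha w}$ of $\Lambda_\alpha^-$ should translate, under $\bF$, exactly into the condition $y\le\tfrac{\alpha x}{\alpha-x}$ defining $\Oa^-$ — verifying this equivalence is a short but genuinely load-bearing computation, and I would present it carefully since it is the one place where the specific curve $\gamma$ from Lemma \ref{characterizeT1} enters the pair picture.

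Part (c) is where the geometry of the first return map is used. By Lemma \ref{lemma1}, $\btheta_k=||\bT_\alpha^k(x_0,y_0)|| = ||(\bx_k,\by_k)||$, which is precisely the first coordinate of $\bF(\bx_k,\by_k)$; so the content of (c) is that the second coordinate of $\bF(\bx_k,\by_k)$ equals $\btheta_{k+1}=||\bT_\alpha^{k+1}(x_0,y_0)||$. I would argue by cases according to the dichotomy in Lemma \ref{characterizeT1}. If $(\bx_k,\by_k)\in\Oa^+$, then $\bT_\alpha(\bx_k,\by_k)=\bT(\bx_k,\by_k)$, and one must show $F^+(\bx_k,\by_k)=||\bT(\bx_k,\by_k)||$. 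Writing $(\bx_k,\by_k)=(x_n,y_n)$ for the appropriate index $n=n_k$ in the underlying $\bT$-orbit, we have $\bT(x_n,y_n)=(x_{n+1},y_{n+1})$, so $||\bT(x_n,y_n)|| = \tfrac{1}{x_{n+1}-y_{n+1}} = \theta_{n+1}$ by \eqref{forkhin}; meanwhile $F^+(x_n,y_n)=\tfrac{-x_ny_n}{x_n-y_n}$, and one checks $\tfrac{-x_ny_n}{x_n-y_n}=\tfrac{1}{x_{n+1}-y_{n+1}}$ directly from the formula $\bT(x,y)=(1/x-[1/x],1/y-[1/x])$, since $x_{n+1}-y_{n+1}=\tfrac{1}{x_n}-\tfrac{1}{y_n}=\tfrac{y_n-x_n}{x_ny_n}$. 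If instead $(\bx_k,\by_k)\in\Oa^-$, then $\bT_\alpha(\bx_k,\by_k)=\bT^2(\bx_k,\by_k)$ and one uses $F^-$; here $\bT(x_n,y_n)=(x_{n+1},y_{n+1})\in\nabla_\alpha$ so the partial quotient $a_{n+1}$ satisfies $[1/x_{n+1}]=1$, and a two-step computation of $\bT^2$ shows $||\bT^2(x_n,y_n)|| = \tfrac{(1-x_n)(1-y_n)}{x_n-y_n}=F^-(x_n,y_n)$, again matching $\theta_{n+2}=\btheta_{k+1}$ via \eqref{forkhin} and Lemma \ref{lemma1}.

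The main obstacle I anticipate is the bookkeeping of indices and signs in part (c): one must keep straight which $\theta_n$ of the full continued-fraction sequence corresponds to $\btheta_k$ versus $\btheta_{k+1}$ (the gap is $1$ or $2$ depending on the $\Oa^\pm$ case), and one must correctly identify the integer part $[1/x_{n+1}]$ on $\nabla_\alpha$ — it is here that $\alpha\ge 1/2$ is essential, exactly as in the proof of Lemma \ref{characterizeT1}. The algebraic identities in (a) and (b) are routine once the sign of each square root is pinned down, but I would not skip the sign verification, since $\sqrt{1-4wz}$ must be chosen so that $\bH$ lands in $\Omega=I\times(-\infty,-1)$ rather than in a spurious branch.
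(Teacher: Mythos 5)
Your argument is correct, and for part (c) it is essentially the paper's proof: both of you split on the dichotomy of Lemma \ref{characterizeT1} and compute $||\bT(\bx_k,\by_k)||=F^+(\bx_k,\by_k)$ and $||\bT^2(\bx_k,\by_k)||=F^-(\bx_k,\by_k)$ directly, the latter using that the relevant partial quotient is $1$. One bookkeeping slip there: the integer part that must equal $1$ is $[1/\bx_k]$ (which holds because $\bx_k>\alpha\ge 1/2$ on $\Oa^-$), not $[1/x_{n+1}]$; the second application of $\bT$ needs no such control, since $||\cdot||$ is unchanged by the common integer translation. For parts (a) and (b) your route is genuinely different and, I think, cleaner: you verify \emph{both} composition identities ${\bf H}\circ\bF=\mathrm{id}$ and $\bF\circ{\bf H}=\mathrm{id}$ by completing the square, $1-4wz=\big(\tfrac{x+y}{x-y}\big)^2$ and $1+4wz=\big(\tfrac{x+y-2}{x-y}\big)^2$, with the signs fixed by $x<1<-y$; together with the range containments this gives injectivity and surjectivity in one stroke. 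The paper instead checks only $\bF\circ{\bf H}=\mathrm{id}$, proves injectivity by a separate elimination argument (from $x-u=y-v$ and $xy(u-v)=uv(x-y)$ down to $v=y$), and obtains surjectivity from a continuity-and-boundary-image argument; your approach buys an elementary, fully explicit proof at the cost of the sign and range verifications you rightly refuse to skip. Do make the containment ${\bf H}(\Lambda_\alpha)\subset\Oa$ explicit (it uses $w+z<1$ and $z<\alpha$), since that is what converts $\bF\circ{\bf H}=\mathrm{id}$ into surjectivity; and note that your reading of (a) as concerning the $F^+$-formula with inverse ${\bf H}$ landing in $\Oa^+$ is the reading that matches the one-term versus two-term density on $\Lambda_\alpha^+$ versus $\Lambda_\alpha^-$ in Theorem \ref{dist}.
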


\begin{proof}
The injectivity in the first statement is proved and argument for ${\bf H}^-$, which is much the same but  more laborious, is left to the reader. 

One sees easily that $\bF\circ{\bf H}(w,z)=(w,z).$ 
We  complete the argument by showing that $\bF$ is one-to-one.
If not then we can find  $( x,y)$ and $  ( u,v) $ so that $|| ( x,y) ||=|| ( u,v) ||$ and
$F^+( x,y)=F^+( u,v)$. The first becomes  
\beq\label{inverse1}
x-u=y-v,\hskip 2.2in
\eeq
and the second takes the form 
\beq\label{inverse2}
xy(u-v)= uv(x-y).\hskip 2in\eeq
Then \ref{inverse2} is shuffled to give $xu(y-v)= yv(x-u)$ and substituting in for $y-v$ from \ref{inverse1} and canceling,  we get $xy=uv$. Taking this back to \ref{inverse2}, we can cancel the terms in front on both sides and then substitute in for  $u= (xy)/v$. Multiplying through by $v$ and simplifying gives $v^2-v(x-y)-xy=0$. Since $v$ must be negative we get  $v  =y$ and the result follows.

You may have noticed that, in fact, both $F^+$ and $F^-$ are defined on all of $\Omega$ and that they have the inverses $ {\bf H}$ and ${\bf H}^-  $, which are defined on the triangle $w+z<1$ in the first quadrant.

The map $\bF$ is continuous and both $F^+$ and $F^-$ extend continuously to the boundaries of their domains. One then checks that the maps on the boundary are onto the boundaries of $\Lambda$ and $\Lambda^-$. This includes the "boundary at infinity"  mapping to the remaining boundary component. For the sake of honesty we check b). 

Consider the curve $y=\frac{\alpha x}{\alpha -x}.$ Its image under $F^-$ has  
\beq\label{wx}
 w=\frac{x-\alpha}{x^2}\,\,\text{and therefore}\,\, x=\frac{1-\sqrt{1-4w\alpha}}{2w}.\hskip .5in
\eeq
Now substituting in for $y$ and using the first and second parts of \ref{wx} we have
\beqq
z=\frac{(1-x)(1-\frac{\alpha x}{\alpha -x})}{x- \frac{\alpha x}{\alpha -x}}=
\frac{x-\alpha}{x^2}+\frac{2\alpha}{x}-1-\alpha\hskip .8in
\eeqq
\beqq
= w+\frac{2\alpha}{x}-1-\alpha=w-\alpha+\sqrt{1-4w\alpha}\hskip 1.1in
\eeqq
which is the curved boundary component $\gamma$ of $\Lambda^-$.

The image of the  piece of the boundary with  $x=1$ and $y\in (\frac{\alpha}{\alpha-1},-\infty)$, is the segment of $z=1$ with $w\in(0,1-\alpha)$. As $y\rightarrow-\infty$, $w$ goes to zero and $z$ takes on values between 0 and $1-\alpha$. 

That completes parts a) and b) of the lemma. We still need to address part c), which says that given $x\in (0,1)$, $\bF$ takes the sequence $(\bar{x}_k,\bar{y}_k)$ to the corresponding sequence of theta pairs.

Suppose we have  $x$ generic for the conclusion, $\bar{\theta}_k=||\bT_{\alpha}^{k}(x_0, y_0)||$, of Lemma \ref{lemma1}. Since $\bar{\theta}_k=||(\bar{x}_k,\bar{y}_k)||$, the result is clear for the first entry, so we turn to the second.

By Lemma \ref{characterizeT1}    there are only two possibilities   for the value of $\bar{\theta}_{k+1}$. First, if $(\bar{x}_k,\bar{y}_k)\in \Oa^+$, then $\bT_{\alpha}(\bar{x}_k,\bar{y}_k)=\bT(\bar{x}_k,\bar{y}_k)$ and
\beqq
\bar{\theta}_{k+1} =||\bT_{\alpha}(\bar{x}_k,\bar{y}_k)||=||\bT(\bar{x}_k,\bar{y}_k) ||=||\frac{1}{\bar{x}_k}-[ \frac{1}{\bar{x}_k}], \frac{1}{\bar{y}_k}-[ \frac{1}{\bar{x}_k}] || 
\eeqq
\beqq
=\frac{- \bar{x}_k \bar{y}_k}{ \bar{x}_k-\bar{y}_k} =  F^+(\bar{x}_k,\bar{y}_k)=  \bF(\bar{x}_k,\bar{y}_k).\hskip 1in
 \eeqq 
The second possibility occurs when  $(\bar{x}_k,\bar{y}_k)\in \Oa^-$. Then  we  must have $1/2\leq \alpha < \bar{x}_k <1$ and consequently $[1/ \bar{x}_k ]=1$. Using this fact and a little calculation gives
 
\beqq
\bar{\theta}_{k+1} =||\bT_{\alpha}(\bar{x}_k,\bar{y}_k)||=||\bT^2(\bar{x}_k,\bar{y}_k)|| = \bigg(\frac{\bar{x}_k \bar{y}_k}{(1-x[\frac{1}{x}])(1-y[\frac{1}{x}])}\bigg )^{-1}
\eeqq
\beqq
=\frac{(1- \bar{x}_k)(1- \bar{y}_k)}{ \bar{x}_k \bar{y}_k}=  F^-(\bar{x}_k,\bar{y}_k)=  \bF(\bar{x}_k,\bar{y}_k) .\hskip .4in
\eeqq    

That completes the proof of the proposition.

\end{proof}

\subsection{The distribution}
The following is a strengthened version of Theorem \ref{talpha}.  

\begin{thm}\label{dist}
For almost all $(x_0,y_0)\in \Oa$ the sequence $(w_k,z_k)=\bF (\bar{x}_k,\bar{y}_k)=\bF(\bT_{\alpha}^k(x_0,y_0))$ is distributed in the region $\Lambda_{\alpha}$ according to the density function $\lambda_{\alpha}$. Furthermore, this holds for almost all $x\in (0,1)$ with $(x_0,y_0)=\bT(x,\infty)$ and in this case $(w_k,z_k)=(\btheta_k,\btheta_{k+1}).$

\end{thm}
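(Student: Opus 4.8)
The plan is to transport the equidistribution result for the first return map $\bT_\alpha$ (Theorem \ref{ally2}) forward through the change of variables $\bF$ established in Proposition \ref{prop}. The core idea is simple: $\bF$ is a measurable bijection from $\Oa$ onto $\Lambda_\alpha$ (parts a and b), so it carries the $\bT_\alpha$-orbit of $(x_0,y_0)$ to the sequence $(w_k,z_k)=\bF(\bT_\alpha^k(x_0,y_0))$, and equidistribution of the former with respect to $\mu_\alpha$ pushes forward to equidistribution of the latter with respect to the image measure $\bF_*\mu_\alpha$. The bulk of the work is then to verify that $\bF_*\mu_\alpha$ has density $\lambda_\alpha$, i.e.\ to compute the Jacobian of $\bF^{-1}$ (equivalently of $\bH$ on $\Lambda_\alpha^+$ and $\bH^-$ on $\Lambda_\alpha^-$) and check that $c_\alpha(x-y)^{-2}$ transforms into the stated piecewise formula.

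Concretely, the first step is to fix a Borel set $B\subset\Lambda_\alpha$ with boundary of measure zero, and observe that $\bF^{-1}(B)\subset\Oa$ is Borel; since $\bF$ and its inverse are continuous on the relevant pieces (Proposition \ref{prop}), $\bF^{-1}(B)$ also has boundary of $\mu_\alpha$-measure zero. Then for almost all $(x_0,y_0)\in\Oa$, by Theorem \ref{ally2},
\beqq
\lim_{n\to\infty}\frac1n\#\{j\leq n\,|\,(w_j,z_j)\in B\}=\lim_{n\to\infty}\frac1n\#\{j\leq n\,|\,\bT_\alpha^j(x_0,y_0)\in\bF^{-1}(B)\}=\mu_\alpha(\bF^{-1}(B)).
\eeqq
The second step is the change-of-variables computation: on $\Lambda_\alpha^+$ use $\bH(w,z)=\big(\tfrac{1-\sqrt{1-4wz}}{2w},\tfrac{-1-\sqrt{1-4wz}}{2w}\big)$, and on $\Lambda_\alpha^-$ use $\bH^-(w,z)$; differentiate to get the Jacobian, note that $x-y=1/w$ in both cases so $(x-y)^{-2}=w^2$, and confirm that $w^2\,|\!\det D\bH|=(1-4\alpha zw)^{-1/2}$ on $\Lambda_\alpha^+$ while the two-valued contribution from $\bH$ and $\bH^-$ over a point of $\Lambda_\alpha^-$ sums to the bracketed expression in the definition of $\lambda_\alpha$. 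This gives $\mu_\alpha(\bF^{-1}(B))=\int_B\lambda_\alpha=\lambda_\alpha(B)$, which is the asserted distribution.

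The third step handles the final sentence. By Theorem \ref{ally} combined with Theorem \ref{ally2} (as in the proof of Theorem \ref{ally2}, where the conclusion for one $y$ propagates to all $y'\in(-\infty,-1)$), the set of $x\in(0,1)$ for which the $\bT_\alpha$-orbit of $(x_0,y_0)=\bT(x,\infty)$ equidistributes has full measure; intersecting with the full-measure set from Lemma \ref{lemma1} and Proposition \ref{prop}(c), we get $\bF(\bar x_k,\bar y_k)=(\btheta_k,\btheta_{k+1})$, so the sequence of theta pairs inherits the same distribution.

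The main obstacle is the Jacobian bookkeeping on $\Lambda_\alpha^-$: unlike $\Lambda_\alpha^+$, points there have preimages coming from \emph{both} branches $\overline{\Oa^+}$ via $\bH$ (wait --- actually from $\Oa\setminus\Oa^-$ through the $F^+$ formula, realized as $\bH$) and $\Oa^-$ through $F^-$, because $\bF$ uses $F^+$ on $\Oa^+$ and $F^-$ on $\Oa^-$ but the ranges overlap on $\Lambda_\alpha^-$. One must carefully argue, using the explicit descriptions of $\Oa^+$, $\Oa^-$ and the boundary curve $\gamma$, that a point of $\Lambda_\alpha^-$ is hit once by $\bF|_{\Oa^+}$ (with density $c_\alpha(\sqrt{1-4\alpha zw})^{-1}$ from the $\bH$-branch) and once by $\bF|_{\Oa^-}$ (contributing the $\sqrt{1+4\alpha zw}$ term from the $\bH^-$-branch), and that adding the two pushforward densities yields exactly $\lambda_\alpha$ on $\Lambda_\alpha^-$; on $\Lambda_\alpha^+$ only the $\bH$-branch contributes. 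Getting the geometry of this two-to-one overlap right, and matching signs and square roots in the Jacobians, is the delicate part; everything else is a direct transport of Theorem \ref{ally2}.
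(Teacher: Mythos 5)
Your proposal is correct and follows essentially the same route as the paper: push the orbit equidistribution of Theorem \ref{ally2} (equivalently, ergodicity of $\bT_{\alpha}$ with respect to $\mu_{\alpha}$) forward through $\bF$, verify via the Jacobians of $\bH$ and $\bH^{-}$ that $\bF_{*}\mu_{\alpha}$ has density $\lambda_{\alpha}$ with the $\bH^{-}$-branch contributing the extra term on $\Lambda_{\alpha}^{-}$, and invoke Proposition \ref{prop}(c) to identify $(w_k,z_k)$ with $(\btheta_k,\btheta_{k+1})$. Your attention to the two-branch overlap over $\Lambda_{\alpha}^{-}$ is exactly the point the paper's displayed integral formula encodes.
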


\begin{proof}
This is a modification of the proof in \cite{jager} along the lines of \cite{haasmolnar2}.
Abusing notation we define the measure $\lambda_{\alpha}$ on $\Lambda_{\alpha}$ by setting $\lambda_{\alpha}(D)=\mu_{\alpha}(\bF^{-1}(D))$ for an open set $D$. Let $D^-=D\cap\Lambda^-$. Then following a messy computation of Jacobians, we have \\
\beqq
\lambda_{\alpha}(D)=\int\int_{F^{-1}(D)}f_{\alpha}\,dxdy =\int\int_Df_{\alpha}({\bf H}(w,z))\,|\text{Jac}{\bf H}(w,z)|\,dwdz 
\eeqq
\beqq
+\int\int_{D^-}f_{\alpha}({\bf H}^-(w,z))\,|\text{Jac}{\bf H}^-(w,z)|\,dwdz = \int\int_D\lambda_{\alpha}(w,z)\,dwdz,
\eeqq
thus justifying our abusive notation.

Since $\bT_{\alpha}$ is ergodic with invariant probability measure $\mu_{\alpha}$, for almost all $(x_0,y_0)\in\Oa$
\beq\label{above}
\lambda_{\alpha}(D)=\mu_{\alpha}(\bF^{-1}(D))=\lim_{k\rightarrow\infty}\frac{1}{k}\#\{j\leq k\,|\,   (\bar{x}_k,\bar{y}_k)\in \bF^{-1}(D)\}
\eeq
\beqq
=\lim_{k\rightarrow\infty}\frac{1}{k}\#\{j\leq k\,|\,   (w_k,z_k)\in  D \}.\hskip 1in
\eeqq
That proves the first assertion of the theorem.

Now suppose $x\in (0,1)$ is chosen from the full measure set $I_0$, guaranteed by Theorem \ref{ally2}, for which the sequence  $(\bar{x}_k,\bar{y}_k)$ is distributed according to the density function $f_{\alpha}$. Then as a consequence of Proposition \ref{prop} and \ref{above} above we get
\beqq
 \lim_{n\rightarrow\infty}\frac{1}{n}\#\{j\leq n\, |\, (\bar{\theta}_j,\bar{\theta}_{j+1})\in D\}
=\lim_{k\rightarrow\infty}\frac{1}{k}\#\{j\leq k\,|\,   (w_k,z_k)\in  D \}=\lambda_{\alpha}(D).
\eeqq
\end{proof}

\begin{figure}[here] 
  \centering
  \subfloat[$\alpha=1$]{\label{fig:gull}\includegraphics[width=0.33\textwidth]{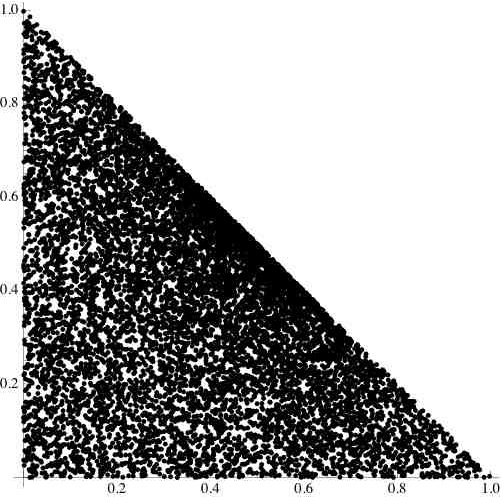}}                
  \subfloat[$\alpha=.7$]{\label{fig:tiger}\includegraphics[width=0.33\textwidth]{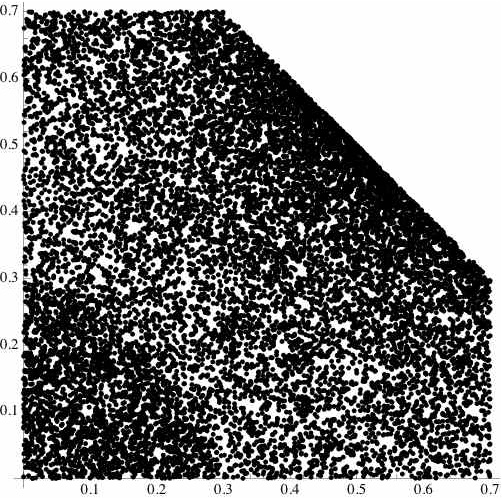}}
   \subfloat[$\alpha=.5$]{\label{fig:mouse}\includegraphics[width=0.33\textwidth]{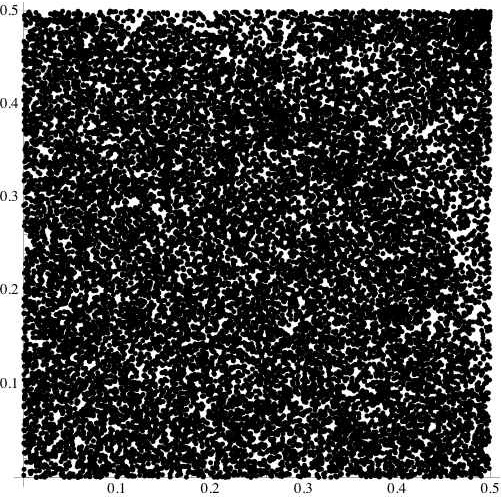}}
  \caption{ }
  \label{animals}
\end{figure}


 
In the three graphs of Figure \ref{animals},  the sets of pairs $(\bar{\theta}_j,\bar{\theta}_{j+1})$ are plotted for different values of $\alpha$. The $\bar{\theta}$'s have been extracted from a sequence of $\theta$'s, generated by taking $x=\pi^2+\sqrt{2}-1$.   When $\alpha=1$ you see the distribution described by Jager's theorem \cite{jager}. For $\alpha<1/2$ it is difficult to see any interesting detail using this approach.

\section{A geometric approach that addresses all values $\alpha$}
  In this final section we shall revisit the first return maps using a more geometric approach. 
  
   \subsection{The setup}
 
  Given a fraction $p/q$ in lowest terms and a number $\kappa\in (0,1]$,  $\bD_{\frac{p}{q}}(\kappa)$ is the disc of radius $\kappa/q^2$ which is tangent to the real line at the point $p/q$. Set $\bD_{\infty}(\kappa)=\{\zeta=u+iv\,|\,v>2/\kappa\}$. These regions are sometimes called horocycles or  their boundaries are called Ford circles. Let $\mathcal{D}(\kappa)$ denote the union of all such discs. We will be particularly interested when $p/q$ is in the closed unit interval.
 
 Suppose $x$ is irrational with continued fraction expansion $x=[a_1,a_2,...]$. As in \cite{sinai} define 
\beqq
\Delta=\Delta_{a_1,...a_{n+1}}^{n+1}=\{\zeta=[a_1,..,a_{n+1}+r]\,|\, r \in (0,1),\text{irrational}\}. 
\eeqq
This interval has endpoints
\beqq
\frac{p_{n+1}}{q_{n+1}}=[a_1,...,a_{n+1}]\,\,\,\text{and}\,\,\, \frac{p_{n+1}+p_n}{q_{n+1}+q_n}=[a_1,...,a_{n+1}+1].
\eeqq

  $T^{n+1}$ restricted to  $\Delta$ closure is a M\"obius transformation $g$ mapping $\bar{\Delta}$,  1-1 and onto  [0,1] and taking $\frac{p_n}{q_n}$ to $\infty$. It is easy to  verify that 
  \beq\label{matrix}
  g_{n+1}(x)=\pmtx{q_{n+1}&-p_{n+1} \cr -q_n&p_n}(x)=\frac{q_{n+1} x -p_{n+1} }{-q_n x +p_n}.  
    \eeq 
Then for any $y<-1$, $\bT^{n+1}(x,y)=(g_{n+1}(x),g_{n+1}(y))$ \cite{haasmolnar}. 

The transformation $g_n$ is  an automorphism of the Riemann sphere. It will preserve the upper half-plane $\HH$ when $n$ is even and it will interchange the upper and lower half-planes when $n$ is odd. We shall extend $g_n$ to a self-map of $\HH$ by setting 
\beqq
G_n(z)=
 \left\{ \begin{array}{ll} 
g_n(z) & {\rm if} \,\, n\, \text{even} \\
\\
\overline{g_n(z)}  & {\rm if} \,\, n\, \text{odd} .
\end{array} \right.\hskip 1.5in
\eeqq

Given $x\not= y$, real numbers or infinity, let $\overline{xy}$ denote the arc of the circle in the upper half-plane $\HH$  orthogonal to $\RR\cup\{\infty\}=  \hat{\RR}$. This is a geodesic in the Poincar\'e  model for the hyperbolic plane. We'll have $\bT^{n+1}$ act on geodesics by setting $\bT^{n+1}(\overline{xy})=  \overline{g_{n+1}(x)g_{n+1}(y)}=G_{n+1}(\overline{xy}).$ 

One begins to see how this fits with the earlier material in the following.

\begin{prop}
 For $x$ irrational in $(0,1)$ with convergents $\p_j/q_j$, any $y<-1$ and $n\in\NN$
 \beqq
 ||\bT^{n+1}(x,y)||<\alpha\,\,\,\text{if and only if}\,\,\, \overline{xy}\cap\bD_{\frac{p_n}{q_n}}(\alpha)\not=\emptyset
 \eeqq 
 \end{prop}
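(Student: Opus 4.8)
The plan is to transport the whole statement by the M\"obius map $g_{n+1}$ to the standard picture at infinity, where the condition $||\bT^{n+1}(x,y)||<\alpha$ becomes transparent. Recall from \eqref{matrix} that $g_{n+1}$ maps $\bar\Delta$ onto $[0,1]$, sends $x$ to $T^{n+1}(x)\in(0,1)$, sends $p_n/q_n$ to $\infty$, and that $\bT^{n+1}(x,y)=(g_{n+1}(x),g_{n+1}(y))$. Since $||(u,v)||=1/(u-v)$, we have $||\bT^{n+1}(x,y)||=1/(g_{n+1}(x)-g_{n+1}(y))$, so the inequality $||\bT^{n+1}(x,y)||<\alpha$ is equivalent to $g_{n+1}(x)-g_{n+1}(y)>1/\alpha$ (note $g_{n+1}(x)>g_{n+1}(y)$ because $y<-1$ sits outside $\bar\Delta$ on the side mapped below $0$). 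The first step is therefore this purely algebraic reduction of the left-hand condition.

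The second step is to interpret the right-hand condition the same way. The horocycle $\bD_{p_n/q_n}(\alpha)$ is the disc of Euclidean radius $\alpha/q_n^2$ tangent to $\hat\RR$ at $p_n/q_n$; a M\"obius transformation carries horocycles to horocycles, and $g_{n+1}$ sends the tangency point $p_n/q_n$ to $\infty$, so $g_{n+1}(\bD_{p_n/q_n}(\alpha))$ is a horocycle based at $\infty$, i.e.\ a half-plane $\{\,\Im\zeta>h\,\}$ for some height $h$. One computes $h$ from the derivative-type expansion factor of $g_{n+1}$ at $p_n/q_n$: since $g_{n+1}(z)=(q_{n+1}z-p_{n+1})/(-q_nz+p_n)$ has denominator vanishing to first order at $p_n/q_n$ with the relevant residue governed by $\det\bigl(\begin{smallmatrix}q_{n+1}&-p_{n+1}\\-q_n&p_n\end{smallmatrix}\bigr)=p_nq_{n+1}-p_{n+1}q_n=\pm1$, the image half-plane turns out to be exactly $\{\Im\zeta>1/\alpha\}$ — the radius $\alpha/q_n^2$ is precisely calibrated so the $q_n^2$ cancels. (This is the horocycle-conventions reason the definition uses $\kappa/q^2$ and $\bD_\infty(\kappa)=\{v>2/\kappa\}$; one should double-check the factor of $2$ against the $\bD_\infty$ normalization and reconcile it with the chordal condition below.) Then $\overline{xy}\cap\bD_{p_n/q_n}(\alpha)\neq\emptyset$ iff $G_{n+1}(\overline{xy})=\overline{g_{n+1}(x)\,g_{n+1}(y)}$ meets $\{\Im\zeta>1/\alpha\}$.

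The third step is the elementary Euclidean-geometry fact closing the loop: the geodesic $\overline{ab}$ with $a,b$ real, $a>b$, is the upper semicircle of Euclidean radius $(a-b)/2$ centered at $(a+b)/2$, whose topmost point has height $(a-b)/2$; hence $\overline{ab}$ meets the half-plane $\{\Im\zeta>1/\alpha\}$ iff $(a-b)/2>1/\alpha$, i.e.\ iff $a-b>2/\alpha$. Applying this with $a=g_{n+1}(x)$, $b=g_{n+1}(y)$ gives: $\overline{xy}\cap\bD_{p_n/q_n}(\alpha)\neq\emptyset$ iff $g_{n+1}(x)-g_{n+1}(y)>2/\alpha$. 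Comparing with Step~1, the two conditions match up to the constant, which forces the consistent normalization $||(u,v)||=1/(u-v)$ versus the disc radius to be read off correctly; with the paper's conventions this pins down the statement, and writing it up is just assembling Steps~1--3. I expect the only real obstacle to be bookkeeping the normalizing constants — the radius $\alpha/q_n^2$, the $\det=\pm1$ expansion factor, and the factor $2$ in $\bD_\infty(\kappa)$ — so that the horocycle $\bD_{p_n/q_n}(\alpha)$ pushes forward to precisely the half-plane the chordal inequality wants; everything else is formal.
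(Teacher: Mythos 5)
Your argument is, in substance, the paper's own: conjugate by $g_{n+1}$ so that the horoball at $p_n/q_n$ becomes a half-plane based at $\infty$, and observe that a geodesic semicircle meets such a half-plane exactly when its Euclidean diameter is large enough. The one thing you leave dangling is the constant, and it does close up --- but not at the height you guessed in Step 2. The image half-plane is $\{\,\Im\zeta>1/(2\alpha)\,\}$, not $\{\,\Im\zeta>1/\alpha\,\}$: the topmost point of $\bD_{\frac{p_n}{q_n}}(\alpha)$ is $p_n/q_n+i\delta$ with $\delta=2\alpha/q_n^2$ the \emph{diameter}, and since the matrix of $g_{n+1}$ has determinant $\pm1$ one has $|\Im\,g_{n+1}(z)|=\Im z/|q_nz-p_n|^2$, which at $z=p_n/q_n+i\delta$ equals $\delta/(q_n^2\delta^2)=1/(q_n^2\delta)=1/(2\alpha)$; this point lies on the boundary circle, whose image is the horizontal boundary line, so $h=1/(2\alpha)$. (Your Step 2 calibrated against the radius where the expansion factor acts on the diameter, hence the stray factor of $2$.) With $h=1/(2\alpha)$, your Step 3 reads: the semicircle over $[g_{n+1}(y),g_{n+1}(x)]$ meets the half-plane iff $(g_{n+1}(x)-g_{n+1}(y))/2>1/(2\alpha)$, i.e.\ $g_{n+1}(x)-g_{n+1}(y)>1/\alpha$, which is exactly your Step 1 with no residual constant. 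Your instinct to distrust the bookkeeping was warranted, but the mismatch sits in the paper's stated convention rather than in the method: the normalization $\bD_{\infty}(\kappa)=\{v>2/\kappa\}$ (and the line ``$(x-y)/2>2/\alpha$'' in the paper's proof) is off by a factor of $4$ from the half-plane that is actually the $g_{n+1}$-image of $\bD_{\frac{p}{q}}(\kappa)$, namely $\{v>1/(2\kappa)\}$; with that correction both the paper's argument and yours are the same correct argument.
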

 
 \begin{proof}
The result holds for $n=0$ by taking $\frac{p_0}{q_0}=\frac{0}{1}=0$.
 
  A simple calculation \cite{haas} shows that the transformations $G_n$ permute the discs in $\mathcal{D}(\alpha).$   
Then, since  $g_{n+1}(p_n/q_n)=\infty$, $G_{n+1}(\bD_{\frac{p_n}{q_n}}(\alpha))=\bD_{\infty}(\alpha)$. Therefore $\overline{xy}\cap\bD_{\frac{p_n}{q_n}}(\alpha)\not=\emptyset$ if and only if $\overline{\bT^{n+1}(x,y)}\cap \bD_{\infty}(\alpha)\not=\emptyset$. But this last is equivalent to $(x-y)/2>2/\alpha$ or $ ||\bT^{n+1}(x,y)||<\alpha$. 
  \end{proof}

 \subsection{ Geodesic-horocycle intersections and $\tau_{\alpha}$   }
 
Given $(x,y)\in \Oa$, let $\bD(x,y)=\bD_{\frac{p_n}{q_n}}(\alpha) $ be the horocycle intersecting $\overline{xy}$ with $q_n$ minimal. If $q_n=1$ and $\bD_0(\alpha)\cap\overline{xy}\not=0$ then  set $\bD(x,y)=\bD_0(\alpha),$ otherwise set $D(x,y)=\bD_1(\alpha).$

\begin{thm}\label{lastthm}
Suppose $\alpha\geq 1/2$. Then $\bD(x,y)$ is either $\bD_0(\alpha)$   or $\bD_1(\alpha)$ and
\beq\label{1/2}
 \tau_{\alpha}(x,y)=
 \left\{ \begin{array}{ll} 

1  & {\rm if} \,\, \bD(x,y)=\bD_0(\alpha) \\
2  & {\rm if} \,\, \bD(x,y)=\bD_1(\alpha)  .
\end{array} \right.\hskip 1.5in
\eeq
 
 Suppose $\alpha<1/2$. Then $\bD(x,y)$ is either $\bD_0(\alpha)$ or  $\bD_1(\alpha)$ and  the conclusion of \ref{1/2} holds or else $\bD(x,y)=\bD_{\frac{p_n}{q_n}}(\alpha)$ with $q_n>1$ and then
 \beqq
  \tau_{\alpha}(x,y)=
 \left\{ \begin{array}{ll} 

n+1  &  \text{\rm if {\it n} is odd and}\,\,  x<\frac{p_n}{q_n}\,\, \text{\rm or if {\it n} is even and} \,\, x>\frac{p_n}{q_n} \\
n+2  &  \text{\rm if {\it n} is even and}\,\,  x<\frac{p_n}{q_n}\,\, \text{\rm or if {\it n} is odd and} \,\, x>\frac{p_n}{q_n}  .
\end{array} \right.\hskip 1.5in
\eeqq
 
\end{thm}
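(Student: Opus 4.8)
The plan is to reduce everything to the geodesic--horocycle intersection picture set up in the preceding proposition, and then to translate the combinatorial data of a continued fraction expansion (the sequence of $\Delta$-intervals) into statements about which Ford circles a geodesic $\overline{xy}$ can meet. First I would record the basic dictionary: by the proposition just proved, $||\bT^{n+1}(x,y)|| < \alpha$ is equivalent to $\overline{xy} \cap \bD_{p_n/q_n}(\alpha) \neq \emptyset$. Hence computing $\tau_\alpha(x,y)$ for $(x,y) \in \Oa$ amounts to finding the \emph{next} index $n$ (after the one certifying membership in $\Oa$) for which $\overline{xy}$ hits $\bD_{p_n/q_n}(\alpha)$, and the theorem asserts that this next horocycle is exactly $\bD(x,y)$, the intersected Ford circle of minimal denominator, with the shift $+1$ or $+2$ governed by the parity bookkeeping of the index.

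Next I would handle the case $\alpha \geq 1/2$. Here one should observe that the Ford circles $\bD_{p/q}(\alpha)$ with $q \geq 2$ are too small, relative to their position, to be reached by $\overline{xy}$ before $\overline{xy}$ has already re-entered $\Oa$ via $\bD_0(\alpha)$ or $\bD_1(\alpha)$; concretely $\bD_0(\alpha)$ is the horocycle at $0$ and $\bD_1(\alpha)$ the one at $1$, and for $\alpha \geq 1/2$ these are large enough (radius $\geq 1/2$) that any vertical-ish geodesic issuing from $x \in (0,1)$ must meet one of them within one or two steps of $\bT$. This is essentially the content already extracted in Lemma \ref{characterizeT1}: $\bT_\alpha = \bT$ on $\Omega_\alpha^+$ and $\bT_\alpha = \bT^2$ on $\Omega_\alpha^-$. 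So for $\alpha \geq 1/2$ the claim is really a geometric re-reading of that lemma: $\bD(x,y) = \bD_0(\alpha)$ corresponds to $(x,y) \in \Omega_\alpha^+$ (return in one step, because $\overline{xy}$ meets the horocycle at $0$, whose preimage under $\bT$ is $\bD_\infty(\alpha)$ — recall $g_1$ sends $p_0/q_0 = 0$ to $\infty$), while $\bD(x,y) = \bD_1(\alpha)$ corresponds to $(x,y) \in \Omega_\alpha^-$, where an extra application of $\bT$ is needed. I would verify the $q_n \in \{0,1\}$ dichotomy by the horocycle-size estimate and then read off $\tau_\alpha$ from Lemma \ref{characterizeT1}.

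For $\alpha < 1/2$ the argument is genuinely new, and this is where the work lies. Now $\bD_0(\alpha)$ and $\bD_1(\alpha)$ are small, so a geodesic $\overline{xy}$ starting from a generic $x$ may well miss both and instead first strike a Ford circle $\bD_{p_n/q_n}(\alpha)$ with $q_n > 1$; the index $n$ at which this first happens is, by minimality of $q_n$ among all intersected horocycles, exactly the $n$ singled out in the statement (one uses that denominators $q_j$ along the convergent sequence are increasing, so the first horocycle met has, among the candidates, minimal denominator). The key computational step is to pin down the $+1$ versus $+2$ alternative: after moving by $G_{n+1}$, the disc $\bD_{p_n/q_n}(\alpha)$ goes to $\bD_\infty(\alpha)$, and the question is whether $\bT^{n+1}(x,y)$ already lies in $\Oa$ or whether one must first pass through the copy of $\nabla_\alpha$-type region and apply $\bT$ once more — and this depends on whether $x$ sits to the left or right of $p_n/q_n$ within the interval $\Delta$, combined with whether $g_{n+1}$ preserves or reverses orientation, i.e.\ the parity of $n+1$. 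I would compute $g_{n+1}(x)$ using the explicit matrix (\ref{matrix}): when $x$ and $p_n/q_n$ are on the ``right side'' for the parity in question, $g_{n+1}(x) \in (0,1)$ with $g_{n+1}(y)$ already below $-1$ in the correct range so that $\bT^{n+1}(x,y) \in \Oa$ and $\tau_\alpha = n+1$; otherwise $\bT^{n+1}(x,y)$ lands just outside, in a strip that maps into $\Oa$ under one more step of $\bT$ (analogous to the $\nabla_\alpha \to \Omega_\alpha^-$ bijection of Lemma \ref{characterizeT1}), giving $\tau_\alpha = n+2$. The main obstacle I anticipate is a clean proof that no horocycle of \emph{intermediate} denominator — one with $1 < q_m < q_n$ for some $m$ not of convergent type, or a convergent with $q_j < q_n$ — is met by $\overline{xy}$ strictly before step $n$; this requires a careful disjointness/nesting argument for Ford circles restricted to the nested intervals $\Delta^{j}_{a_1,\dots,a_j}$, using that $\overline{xy}$ with $x \in \Delta$ and $y < -1$ is ``trapped'' by the continued fraction nesting. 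Once that geometric separation lemma is in hand, the parity analysis and the explicit Möbius computation finish the proof.
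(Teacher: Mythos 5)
There is a genuine gap in your plan for the case $\alpha<1/2$, and it comes from a misreading of where the $n+2$ branch comes from. You treat $p_n/q_n$ as a convergent of $x$ and frame the $n+1$ versus $n+2$ alternative as the question of whether $\bT^{n+1}(x,y)$ ``already lies in $\Oa$'' or needs one more push, decided by which side of $p_n/q_n$ the point $x$ sits on. But for a genuine convergent both of these are already settled: the side is forced by the parity of $n$ (odd convergents lie above $x$, even ones below, so the combinations ``$n$ odd and $x>p_n/q_n$'' and ``$n$ even and $x<p_n/q_n$'' never occur for a convergent of $x$), and the geodesic--horocycle proposition says that $\overline{xy}\cap\bD_{\frac{p_n}{q_n}}(\alpha)\not=\emptyset$ is \emph{equivalent} to $\bT^{n+1}(x,y)\in\Oa$, so there is nothing left to check. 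Carried out as written, your computation would only ever return $\tau_{\alpha}=n+1$. The $n+2$ branch occurs precisely when the minimal-denominator Ford circle met by $\overline{xy}$ is based at a fraction that is \emph{not} a convergent of $x$ but the other endpoint of the cylinder containing $x$ (a mediant): in the paper's base case $\bD(x,y)=\bD_{\frac{1}{a_1}}(\alpha)$ with $x\in(1/a_1,1/(a_1-1))$, the first convergent of $x$ is $1/(a_1-1)$, not $1/a_1$, and the proposition gives no direct information about $\tau_{\alpha}$ at that fraction. Your outline never engages with this configuration.

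The paper's actual argument is an induction on $n$ rather than a one-shot application of $G_{n+1}$: a single application of $\bT$ acts on the relevant cylinder $\Delta^1_{a_m}$ as the orientation-reversing M\"obius map $z\mapsto 1/z-a_m$, carries $\bD(x,y)$ to $\bD(\bT(x,y))$, drops the index by one and flips the side of $x$ relative to the distinguished fraction; iterating reduces everything to the explicit $n\leq 1$ cases, where the wrong-side (mediant) configuration is seen to land on $\bD_1(\alpha)$ and to cost exactly one extra iterate. This inductive set-up also absorbs the ``separation lemma'' you flag as your main obstacle---that no horocycle of smaller denominator is met earlier---since at each stage one only needs that the single M\"obius step preserves minimality of the intersected Ford circle, rather than a global nesting statement. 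If you want to salvage a direct $G_{n+1}$ computation, you must first split into the convergent and mediant cases and treat the latter separately; as it stands the proposal proves only half of the $\alpha<1/2$ statement.
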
 

One could use this point of view to characterize $\Oa^+$ and $\Oa^-$ and reprove Lemma \ref{characterizeT1}. In Example 1, we will see how one might address the next simplest case with $\alpha<1/2.$ 
 
 \begin{proof}
  First observe that if $\alpha\geq 1/2$, every $\overline{xy}$ must meet one of   $\bD_0(\alpha)$ or $\bD_1(\alpha)$. 
Suppose $D(x,y)=\bD_0(\alpha)$. Then  $\overline{xy}\,\cap\bD_0(\alpha)\not=\emptyset,$ and by  the proposition, $ ||\bT (x,y)||<\alpha.$  In other words, $\tau_{\alpha}(x,y)=1$.     If $\overline{xy}\cap\bD_0(\alpha) =\emptyset,$ then $\overline{xy}\cap\bD_1(\alpha) \not=\emptyset.$  Then $x>1/2$ and consequently $\frac{p_1}{q_1} = \frac{1}{1}.$  Again, it follows from  the proposition that $\tau_{\alpha}(x,y)=2$. Note that \ref{1/2} remains true even when $\alpha\leq 1/2$.

Henceforth we take $\alpha\leq 1/2$. Suppose $\bD(x,y)=\bD_{ \frac{1}{a_1}} (\alpha)$ for $a_1>1$. If $x>\frac{1}{a_1}$ then an easy computation shows that $x<\frac{1}{a_1-1}$ or, in other words, $x\in \Delta^1_{a_1-1}.$ Let $\bT(x,y)=(x',y').$ $\bT$ maps the closure of  $\Delta^1_{a_1-1}$ to [0,1], taking $\frac{1}{a_1}$ to 1 and the geodesic $\overline{xy}$ to $\overline{x'y'}$, which intersects $\bD_1$. It follows from the previous paragraph that $\tau_{\alpha}(x',y')=2$ and consequently that $\tau_{\alpha}(x,y)=3$. Similarly, if $x<\frac{1}{a_1}$ then $x>\frac{1}{a_1+1}$ or $x\in \Delta^1_{a_1}.$  $\bT$ maps the closure of  $\Delta^1_{a_1}$ to [0,1], taking $\frac{1}{a_1}$ to 0 and the geodesic $\overline{xy}$ to $\overline{x'y'}$, which intersects $\bD_0(\alpha)$. Then $\tau_{\alpha}(x',y')=1$ and $\tau_{\alpha}(x,y)=2$. The theorem follows for $n=1$ where $\frac{p_1}{q_1}=\frac{1}{a_1}.$

The proof is completed by induction. Suppose the result holds for $k>1$. $\frac{p_{k+1}}{q_{k+1}}$ is contained in one of the intervals  
$ \Delta^1_{a_m} $ Thus, on the interval, and therefore in a neighborhood of the fraction, $\bT$ is the M\"obius transformation $g(z)=(1/z)-a_m.$ Then, as above, $\bT$ maps the geodesic $\overline{xy}$ to $\overline{x'y'}.$ $g$ reverses orientation. Consequently, if $x$ is greater than or less than $\frac{p_{k+1}}{q_{k+1}}$ then the reverse is true for $\bT(x)$ relative to $\frac{p_{k+1}}{q_{k+1}}$. In particular, if $x>\frac{p_{k+1}}{q_{k+1}}$,  then $T(x)=x'<\frac{p_{k}}{q_{k}}$. Then  $\bD(x',y')= \bD_{ \frac{p_{k}}{q_{k}} }(\alpha)$ and by the inductive hypothesis, $\tau_{\alpha}(x',y')$ is $k+1$ if $k$ is odd and it is $k+2$ if $k$ is even. It follows that $\tau_{\alpha}(x,y)$ is $k+2$ if $k+1$ is even and it is $k+3$ if $k+1$ is odd. In the same manner the result follows when $x<\frac{p_{k+1}}{q_{k+1}}$. That completes the proof.
\end{proof}

\noindent {\bf Example 1.}
  It is possible to choose $\epsilon_0 $ small enough so that for $\alpha=1/2-\epsilon_0$, $\overline{xy}$ must intersect one of the horocycles $\bD_0(\alpha), \bD_1(\alpha)$ or $\bD_{\frac{1}{2}}(\alpha) $ for any $(x,y)\in\Oa$.    If the first or the second hold then $\tau_{\alpha}(x,y)$ is respectively 1 or 2. In the remaining case  $\overline{xy}$ meets  $\bD_{\frac{1}{2}}(\alpha) $ but neither of the other horocycles. If $x<1/2$ then by the theorem  $\tau_{\alpha}(x,y)=2$ whereas, if  $x>1/2$, then  $\tau_{\alpha}(x,y)=3.$ It is possible, although tedious, to numerically characterize the regions on which each of the behaviors holds. Then $\Oa$ will be divided into three regions and by specifying the appropriate power $\tau_{\alpha}(x,y)$  of $\bT$ on each of them, one can describe  $\bT_{\alpha}$. One could then, in principle,   compute the density function for the distribution of theta pairs as in Theorem \ref{dist}.
  
The dynamical systems determined by values $\alpha$ with $1/2-\epsilon_0\leq  \alpha<1/2$ will be structurally identical, where the sets $\Oa$ are subdivided into  three regions of the same shape, on which $\tau_{\alpha}(x,y)$ is constant. We conjecture that there will be an infinite, discrete set of numbers $\alpha_i$ decreasing to some value $\alpha_{\infty}$ greater than zero, so the for $\alpha_{i+1}<\alpha<\alpha_{i} $ the dynamical systems $\{\bT_{\alpha}, \Oa\}$ are topologically conjugate. At one of the values $\alpha_i$, the system will abruptly change, in particular $\sup\tau_{\alpha}(x,y)$ will increase.  Below $\alpha_{\infty}$ the the systems should attain a higher degree of complexity. All this should somehow relate to the Markoff spectrum \cite{cf}.
 \vskip .2in
 
 \noindent {\bf Example 2.}
 It follows in the previous example that if $\alpha>1/2-\epsilon_0$ then $\tau_{\alpha}(x,y)$ is defined for all $(x,y)\in\Oa.$ This will not hold in general for all values of $\alpha$. In fact the picture is quite complex. In the simplest case, consider what happens when we take $\zeta=\frac{1}{2}(\sqrt{5}-1).$ By an old theorem of Hurwitz \cite{HW}, there are only finitely many fractions $p/q$ with
 \beqq
 |\zeta-\frac{p}{q}|<\frac{1}{\sqrt{5}q^2}.\hskip 1.4in
 \eeqq
 
This inequality can be read geometrically as saying that $\overline{\zeta\infty}$ meets only finitely many of the horocycles  $\bD_{\frac{p_n}{q_n}}(\frac{1}{\sqrt{5}}),$ where the fractions are the continued fraction convergents of $\zeta.$ 
The convergent $\frac{p_1}{q_1}=1$ is one of these, so $\overline{\zeta\infty}\cap D_1\not=\emptyset.$ Then $Y$ can be chosen so that for $y<Y$, 
$\overline{\zeta y}\cap D_1(\frac{1}{\sqrt{5}})\not=\emptyset$, the arcs are disjoint from $D_0(\frac{1}{\sqrt{5}})$ and $(\zeta,y)\in\Omega_{\frac{1}{\sqrt{5}}}$. By Theorem \ref{lastthm}, $\tau_{\frac{1}{\sqrt{5}}}(\zeta,y)=2.$   But since $\overline{\zeta y}$ is asymptotic to $\overline{\zeta\infty}$, by Hurwitz's Theorem it can only meet finitely many of the horocycles $\bD_{\frac{p_n}{q_n}}(\frac{1}{\sqrt{5}}).$ Consequently, for some $N$ we'll have $(x_N,y_N)\in\Omega_{\frac{1}{\sqrt{5}}}$ but $\bT(x_n,y_n)\not \in\Omega_{\frac{1}{\sqrt{5}}} $ for any $n>N$. Thus $\tau_{\frac{1}{\sqrt{5}}}(x_N,y_N)$ is not defined. 

As in Example 1, as $\alpha$ decreases one expects that the sets on which $\tau_{\alpha}$ is {\em not} defined will increase in complexity. We conjecture that there exists a value $\alpha'_{\infty}$ so that if $\alpha >\alpha'_{\infty}$, the set $S_{\alpha}=\{x\,|\, \tau_{\alpha}\,\text{is not defined}\}$ is finite whereas for $\alpha <\alpha'_{\infty}$ the complexity and probably the Hausdorff dimension will increase with decreasing $\alpha$.

\vskip .2in



\begin{thebibliography}{00}

\bibitem{af0}  R.L. Adler, L. Flatto. Cross-section maps for
geodesic flows. In {\em Ergodic Theory and Dynamical Systems,} Progress
in Math. 2, A. Katok, ed., Birkh\"auser, Boston, 1980.

\bibitem{cf}  T. Cusick and M. Flahive, {\em The Markoff and Lagrange spectra,} Math. Surveys and Monographs 30, AMS, Providence 1989

\bibitem{dankraa} K. Dajani and C. Kraaikamp, {\em Ergodic Theory of Numbers}, Carus Math. Monographs 29, MAA, Washington, D.C., 2002.
 
 \bibitem{bosma}  W. Bosma, Approximation by mediants. {\em Math. of Computation}
 {\bf 54} (1990), 421--432.

  \bibitem {sinai} I. P. Cornfeld and S.V. Fomin,
 Ya. G. Sinai, `{\em Ergodic Theory},  Springer-Verlag, Berlin-Heidelberg-New York, 1982. 




\bibitem{bjw} W. Bosma, H. Jager and F. Wiedijk, Some metrical observations on the approximation
 of continued factions. {\em Indag. Math.} 45 (1983), 281--299.

 \bibitem{haas1} A. Haas, An ergodic sum related to the approximation by continued fractions, {\em N.Y.J. Math.}, Volume 11 (2005), 345-350

\bibitem{haas} A. Haas, The distribution of geodesic excursions out the end of a hyperbolic orbifold and 
approximation with respect to a Fuchsian group, {\em Geom. Dedicata}, Volume 116, No. 1 (2005), 129-155
    
    \bibitem{haasseries} A. Haas and C. Series, The Hurwitz constant and diophantine approximation on Hecke groups, {\em J. London Math. Soc.} {\bf 34} (2)(1986), 219Ð234.    
    
    
\bibitem {haasmolnar} A. Haas and D. Molnar, Metric diophantine approximation for
continued fraction like maps of the interval. {\em Trans. of the Amer. Math. Soc.}, {\bf 356} (2004), 2851--2870
 

\bibitem {haasmolnar2} A. Haas and D. Molnar, The distribution of Jager pairs for continued fraction like mappings of the interval, {\em Pacific J. Math.} {\bf 217} (1) (2004), 101--114





\bibitem {HW} G.H. Hardy and E.M. Wright, {\em An Introduction to the Theory of Numbers,}
Oxford Univ. Press, 1979. 

 \bibitem{jager} H. Jager, The distribution of certain sequences connected with the continued fraction, 
 {\em Indag. Math.,} {\bf 89} (1) (1986), 61--69.

\bibitem{keller} G. Keller, {\em Equilibrium States in Ergodic Theory}, LMS Student Texts 42, Cambridge Univ. Press, Cambridge 1998.


\bibitem{khinchin} A. Khintchine, {\em Continued Fractions}, Univ. Chicago Press, 3rd ed., 1961




\bibitem{koksma} J.F. Koksma, {\em Diophantische Approximationen,} Julies Springer, Berlin 1936. 

\bibitem{knuth} D. Knuth, The distribution  of  continued fraction approximations. {\em J. of Number Theory} (1984), 443--448.

\bibitem{levy} P. L\'evy, Sur le d\'eveloppement en fraction continue d'un nombre choisi au hasard, {\em Compositio Math.} {\bf 3} (1936) 286-303.



\bibitem {nakada} H. Nakada, Metrical theory for a class of
continued fraction transformations and their natural extensions, {\em  Tokyo
J. Math.} {\bf 4} (1981), 399--426.

\bibitem{nakada2} H. Nakada, Continued fractions, geodesic flows and Ford circles, In {\em Algorithms, Fractals and Dynamics}, Y. Takahashi, ed., Plenum Press, New York 1995



  \bibitem{series} C. Series, The modular group and continued
  fractions. {\em J. London Math. Soc.}, {\bf 31} (1985), 69 -- 80. 
  
   
  
   \end{thebibliography}
\end{document}